\begin{document}
\bibliographystyle{plain}
\newcommand{\bea}{\begin{eqnarray}}
\newcommand{\eea}{\end{eqnarray}}
\newcommand{\bfmN}{{\mbox{\boldmath{$N$}}}}
\newcommand{\bfmx}{{\mbox{\boldmath{$x$}}}}
\newcommand{\bfmv}{{\mbox{\boldmath{$v$}}}}
\newcommand{\se}{\setcounter{equation}{0}}
\newtheorem{corollary}{Corollary}[section]
\newtheorem{example}{Example}[section]
\newtheorem{definition}{Definition}[section]
\newtheorem{theorem}{Theorem}[section]
\newtheorem{proposition}{Proposition}[section]
\newtheorem{lemma}{Lemma}[section]
\newtheorem{remark}{Remark}[section]
\newtheorem{result}{Result}[section]
\newcommand{\vtwo}{\vskip 4ex}
\newcommand{\vthree}{\vskip 6ex}
\newcommand{\vfour}{\vspace*{8ex}}
\newcommand{\hone}{\mbox{\hspace{1em}}}
\newcommand{\hon}{\mbox{\hspace{1em}}}
\newcommand{\htwo}{\mbox{\hspace{2em}}}
\newcommand{\hthree}{\mbox{\hspace{3em}}}
\newcommand{\hfour}{\mbox{\hspace{4em}}}
\newcommand{\von}{\vskip 1ex}
\newcommand{\vone}{\vskip 2ex}
\newcommand{\n}{\mathfrak{n} }
\newcommand{\m}{\mathfrak{m} }
\newcommand{\q}{\mathfrak{q} }
\newcommand{\aF}{\mathfrak{a} }

\newcommand{\kl}{\mathcal{K}}
\newcommand{\p}{\mathcal{P}}
\newcommand{\Lt}{\mathcal{L}}
\newcommand{\bv}{{\mbox{\boldmath{$v$}}}}
\newcommand{\bc}{{\mbox{\boldmath{$c$}}}}
\newcommand{\bx}{{\mbox{\boldmath{$x$}}}}
\newcommand{\br}{{\mbox{\boldmath{$r$}}}}
\newcommand{\bs}{{\mbox{\boldmath{$s$}}}}
\newcommand{\bb}{{\mbox{\boldmath{$b$}}}}
\newcommand{\ba}{{\mbox{\boldmath{$a$}}}}
\newcommand{\bn}{{\mbox{\boldmath{$n$}}}}
\newcommand{\bp}{{\mbox{\boldmath{$p$}}}}
\newcommand{\by}{{\mbox{\boldmath{$y$}}}}
\newcommand{\bz}{{\mbox{\boldmath{$z$}}}}
\newcommand{\be}{{\mbox{\boldmath{$e$}}}}
\newcommand{\proof}{\noindent {\sc Proof :} \par }
\newcommand{\bP}{{\mbox{\boldmath{$P$}}}}

\newcommand{\M}{\mathcal{M}}
\newcommand{\R}{\mathbb{R}}
\newcommand{\Q}{\mathbb{Q}}
\newcommand{\Z}{\mathbb{Z}}
\newcommand{\N}{\mathbb{N}}
\newcommand{\C}{\mathbb{C}}
\newcommand{\xar}{\longrightarrow}
\newcommand{\ov}{\overline}
 \newcommand{\rt}{\rightarrow}
 \newcommand{\om}{\omega}
 \newcommand{\wh}{\widehat }
 \newcommand{\wt}{\widetilde }
 \newcommand{\g}{\Gamma}
 \newcommand{\lm}{\lambda}

\newcommand{\eN}{\EuScript{N}}
\newcommand{\ncom}{\newcommand}
\newcommand{\norm}{\|\;\;\|}
\newcommand{\inp}[2]{\langle{#1},\,{#2} \rangle}
\newcommand{\nrm}[1]{\parallel {#1} \parallel}
\newcommand{\nrms}[1]{\parallel {#1} \parallel^2}
\title{On the Convergence of Quasilinear Viscous Approximations Using Compensated Compactness}
\author{ Ramesh Mondal\footnote{ramesh@math.iitb.ac.in}\,\, and S. Sivaji Ganesh\footnote{siva@math.iitb.ac.in}}
\maketitle{}
\begin{abstract}
Method of compensated compactness is used to show that the almost everywhere limit of quasilinear viscous approximations is the unique entropy
solution (in the sense of {\it Bardos et.al}\cite{MR542510}) of the corresponding scalar conservation laws in a bounded domain in $\mathbb{R}^{d}$, where the viscous term is
of the form $\varepsilon div\left(B(u^{\varepsilon})\nabla u^{\varepsilon}\right)$.
 \end{abstract}
 \section{Introduction}
 Let $\Omega$ be a bounded domain in $\mathbb{R}^{d}$ with smooth boundary $\partial \Omega$. For $T >0$, denote $\Omega_{T}:= \Omega\times(0,T)$. 
 We write the initial boundary value problem $\left(\mbox{IBVP}\right)$ for scalar conservation laws given by
 \begin{subequations}\label{ivp.cl}
\begin{eqnarray}
  u_t + \nabla\cdot f(u) =0& \mbox{in }\Omega_T,\label{ivp.cl.a}\\
u(x,t)= 0&\mbox{on}\,\,\partial \Omega\times(0,T),\label{ivp.cl.b}\\
  u(x,0) = u_0(x)& x\in \Omega.\label{ivp.cl.c}
  \end{eqnarray}
\end{subequations}
where $f=(f_{1},f_{2},\cdots,f_{d})$ is the flux function and $u_{0}$ is the initial condition.\\
Denote by  $u_{0\varepsilon}$, the regularizations of the initial condition $u_{0}$ of IBVP \eqref{ivp.cl}, using the standard
sequence of mollifiers $\rho_{\varepsilon}$ defined on $\R^d$. It is given by
\begin{eqnarray*}\label{regularized.eqn2}
 u_{0\varepsilon} := u_{0}\ast\rho_{\varepsilon}.\nonumber
\end{eqnarray*}
Consider the IBVP for the viscosity problem 
\begin{subequations}\label{regularized.IBVP}
\begin{eqnarray}
 u^\varepsilon_{t} + \nabla \cdot f(u^{\varepsilon}) = \varepsilon\,\nabla\cdot\left(B(u^\varepsilon)\,\nabla u^\varepsilon\right)
 &\mbox{in }\Omega_{T},\label{regularized.IBVP.a} \\
    u^\varepsilon(x,t)= 0&\,\,\,\,\mbox{on}\,\, \partial \Omega\times(0,T),\label{regularized.IBVP.b}\\
u^{\varepsilon}(x,0) = u_{0\varepsilon}(x)& x\in \Omega,\label{regularized.IBVP.c}
\end{eqnarray}
\end{subequations}
indexed by $\varepsilon>0$. The aim of this article is to prove that the {\it a.e.} limit of sequence of solutions 
$\left(u^{\varepsilon}\right)$ to \eqref{regularized.IBVP}(called quasilinear viscous approximations) is the unique entropy solution for IBVP \eqref{ivp.cl}.\\
Let us write the hypothesis on $f,\,B,\,$ and $u_{0}$. \\
\noindent{\bf Hypothesis D:}
\begin{enumerate}
 \item Let $f\in \left(C^4(\R)\right)^d$, $f^\prime\in \left(L^\infty(\R)\right)^d$, and denote 
 $$\|f^\prime\|_{\left(L^\infty(\R)\right)^d}:=\max_{1\leq j\leq d}\,\sup_{y\in\R}|f^\prime_j(y)|.$$
 \item Let $B\in C^3(\R)\cap L^\infty(\R)$, and there exists an $r>0$ such that $B\geq r$.
 \item  Let the space $L^{\infty}_c({\Omega})$ consisting of those elements of $L^{\infty}({\Omega})$ whose essential 
 support is a compact subset of $\Omega$. Let $u_{0}$ be in $H^{1}(\Omega)\cap L^{\infty}_{c}(\Omega)$ and we denote 
 $I:=[-\|u_0\|_{\infty},\|u_0\|_{\infty}]$.
\end{enumerate}
In this context, we have the following main result. 
\begin{theorem}\label{paper2.compensatedcompactness.theorem2}
 Let $f,\,B,\,u_{0}$ satisfy Hypothesis D. Then the {\it a.e.} limit of the quasilinear viscous approximations 
 $\left(u^{\varepsilon}\right)$ is the unique entropy solution of IBVP \eqref{ivp.cl} in the sense of {\it Bardos et.al}\cite{MR542510}.
\end{theorem}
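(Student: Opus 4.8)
The plan is to combine standard a priori estimates for the uniformly parabolic regularization \eqref{regularized.IBVP.a} with Tartar's compensated compactness framework, and then to identify the resulting strong limit as the Bardos--LeRoux--Nédélec entropy solution. First I would establish the basic estimates. Since $B\geq r>0$ by Hypothesis D, equation \eqref{regularized.IBVP.a} is uniformly parabolic, so for each fixed $\varepsilon>0$ a regular solution $u^\varepsilon$ exists; the maximum principle together with $\|u_{0\varepsilon}\|_{L^\infty}\leq\|u_0\|_{L^\infty}$ gives the uniform bound $\|u^\varepsilon\|_{L^\infty(\Omega_T)}\leq\|u_0\|_{L^\infty}$, so every $u^\varepsilon$ takes values in the compact interval $I$. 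Multiplying \eqref{regularized.IBVP.a} by $u^\varepsilon$, integrating by parts, and using the boundary condition \eqref{regularized.IBVP.b} (which kills the flux and viscous boundary terms) together with $B\geq r$ yields the energy estimate
\begin{equation*}
\varepsilon\,r\int_0^T\!\!\int_\Omega |\nabla u^\varepsilon|^2\,dx\,dt \;\leq\; \tfrac12\|u_{0\varepsilon}\|_{L^2(\Omega)}^2 \;\leq\; C,
\end{equation*}
uniformly in $\varepsilon$; equivalently $\sqrt{\varepsilon}\,\nabla u^\varepsilon$ is bounded in $L^2(\Omega_T)$, whence $\varepsilon\,\nabla u^\varepsilon\to 0$ in $L^2(\Omega_T)$.

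Next I would set up the compensated compactness argument. For an arbitrary $C^2$ entropy $\eta$ with associated flux $q$ satisfying $q'=\eta' f'$, multiplying \eqref{regularized.IBVP.a} by $\eta'(u^\varepsilon)$ produces the entropy balance
\begin{equation*}
\partial_t\,\eta(u^\varepsilon) + \nabla\cdot q(u^\varepsilon) = \varepsilon\,\nabla\cdot\!\left(\eta'(u^\varepsilon)B(u^\varepsilon)\nabla u^\varepsilon\right) - \varepsilon\,\eta''(u^\varepsilon)B(u^\varepsilon)|\nabla u^\varepsilon|^2.
\end{equation*}
On the right-hand side the first term tends to $0$ in $H^{-1}_{loc}(\Omega_T)$, since $\varepsilon\,\nabla u^\varepsilon\to 0$ in $L^2$ while $\eta'$ and $B$ are bounded on $I$; the second term is bounded in $L^1(\Omega_T)$ by the energy estimate, hence bounded in $\mathcal{M}(\Omega_T)$. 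As $\eta(u^\varepsilon)$ and $q(u^\varepsilon)$ are bounded in $L^\infty$, the left-hand side is bounded in $W^{-1,\infty}_{loc}(\Omega_T)$, and Murat's lemma then gives that $\partial_t\,\eta(u^\varepsilon)+\nabla\cdot q(u^\varepsilon)$ is precompact in $H^{-1}_{loc}(\Omega_T)$ for every entropy pair. Extracting a Young measure $\nu_{x,t}$ from the $L^\infty$-bounded family $(u^\varepsilon)$ and exploiting this $H^{-1}$ compactness to pass to the limit in products of weakly convergent entropy/flux pairs, I would obtain the commutation relation
\begin{equation*}
\langle \nu, \eta_1 q_2 - \eta_2 q_1\rangle = \langle\nu,\eta_1\rangle\langle\nu,q_2\rangle - \langle\nu,\eta_2\rangle\langle\nu,q_1\rangle,
\end{equation*}
which exhibits $\nu_{x,t}$ as a measure-valued entropy solution and, for the scalar conservation law, reduces it to a Dirac mass $\delta_{u(x,t)}$ for almost every $(x,t)$. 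This delivers the a.e.\ convergence $u^\varepsilon\to u$ (along a subsequence) with $u\in L^\infty(\Omega_T)$.

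Finally I would identify $u$ as the entropy solution. Passing to the limit in the entropy balance for the Kruzhkov entropies $\eta(\cdot)=|\cdot-k|$ shows that $u$ satisfies the interior entropy inequalities. The delicate point is the boundary: to verify that $u$ meets the Bardos--LeRoux--Nédélec boundary entropy condition on $\partial\Omega\times(0,T)$, I would test \eqref{regularized.IBVP.a} against functions concentrating near the boundary and combine the homogeneous Dirichlet datum \eqref{regularized.IBVP.b} with the energy bound to control the boundary traces and the associated boundary layer. I expect this boundary analysis to be the main obstacle, since the Dirichlet condition for the viscous problem must be converted, in the limit, into the correct one-sided flux inequality $\mathrm{sgn}(\gamma u)\bigl(f(\gamma u)-f(k)\bigr)\cdot n\geq 0$ for all $k$ between $0$ and the trace $\gamma u$, with the quasilinear factor $B(u^\varepsilon)$ complicating the layer estimates relative to the semilinear case. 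Once the interior and boundary entropy inequalities are in hand, $u$ is an entropy solution in the sense of \cite{MR542510}, and the uniqueness furnished there forces the whole family $(u^\varepsilon)$ to converge to this unique limit, completing the proof.
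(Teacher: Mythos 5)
Your compactness machinery is, step for step, the paper's: the maximum principle (Theorem~\ref{regularized.chap3thm1}), the energy bound $\varepsilon\,r\,\|\nabla u^{\varepsilon}\|^{2}_{(L^{2}(\Omega_{T}))^{d}}\leq \frac{1}{2}\|u_{0\varepsilon}\|^{2}_{L^{2}(\Omega)}$ (Theorem~\ref{Compactness.lemma.1}), the splitting of the entropy production into a term tending to $0$ in $H^{-1}(\Omega_{T})$ plus a term bounded in $M(\Omega_{T})$, closed by Murat's Lemma~\ref{chap9lem2} (this is exactly the proof of Theorem~\ref{chap9thm2}), and then Young measures plus the div-curl lemma to collapse $\nu_{x,t}$ to a Dirac mass (Theorems~\ref{chap9thm3} and~\ref{compactness12.thm2}). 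Two caveats on this part. First, the collapse to a Dirac mass is not free for a scalar flux: it needs a genuine-nonlinearity assumption, which the paper states explicitly (density of the set where $f''\neq 0$ when $d=1$ in Theorem~\ref{chap9thm3}; linear independence of $f_{1}',f_{2}'$ in the proof of Theorem~\ref{compactness12.thm2}) and which is not part of Hypothesis D; your unqualified claim that the commutation relation reduces $\nu_{x,t}$ to a Dirac mass fails for, e.g., affine $f$. Second, the paper carries out the reduction only for $d=1$ and $d=2$ (the latter following Tadmor--Rascle--Bagnerini \cite{Tadmor} and requiring the additional compactness of $\partial_{t}u^{\varepsilon}$, Theorem~\ref{timedervative.thm1}); your sketch reads as if the reduction were dimension-generic, which the single div-curl pairing you invoke does not deliver.

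The genuine gap is your last step, which is the step the theorem is actually about. For the identification of the limit as the Bardos--LeRoux--N\'ed\'elec entropy solution you offer a plan, not a proof: you never derive the boundary inequality $\mathrm{sgn}(\gamma u)\left(f(\gamma u)-f(k)\right)\cdot n\geq 0$, you do not establish that the $L^{\infty}$ limit $u$ possesses a strong boundary trace $\gamma u$ at all (in the BLN framework this normally comes from BV regularity up to the boundary, which compensated compactness does not provide), and you yourself flag the boundary-layer analysis with the quasilinear weight $B(u^{\varepsilon})$ as the unresolved ``main obstacle''. Interior Kruzhkov inequalities do pass to the limit as you say, but without the trace and the boundary inequality you have not produced an entropy solution in the sense of \cite{MR542510}, and without that you cannot invoke uniqueness to upgrade subsequential convergence to convergence of the whole family. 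For what it is worth, the paper does not do this work here either: its entire proof of Theorem~\ref{paper2.compensatedcompactness.theorem2} is the statement that it follows from the proof of Theorem 1.1 of \cite{Ramesh}, i.e., the boundary identification and uniqueness are outsourced to the companion paper, where they rest on BV estimates rather than on compensated compactness. So on the part actually proved in this paper (a.e. convergence of a subsequence) you match the paper's route; on the identification step you have an acknowledged hole where the paper has a citation.
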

In \cite{Ramesh}, we prove BV estimates and  as a consequence of that, we have the existence of an almost everywhere convergent 
subsequence of quasilinear viscous approximations $\left(u^{\varepsilon}\right)$. But in this article, we use method of compensated
compactness to show the existence of an almost everywhere convergent subsequence of quasilinear viscous approximations $\left(u^{\varepsilon}\right)$.

The plan of the paper is the following. In section 2, we prove Existence, uniqueness, maximum principle and derivative estimates of 
the quasilinear viscous approximations and In section 3, we prove compactness of sequence of quasilinear viscous approximations
$\left(u^{\varepsilon}\right)$ and Theorem \ref{paper2.compensatedcompactness.theorem2}.
\section{Existence, uniqueness, maximum principle and derivative estimates}
We now want to use the following higher regularity result from \cite[p.18]{Ramesh} of the generalized viscosity problem
 \begin{subequations}\label{ibvp.parab}
\begin{eqnarray}
 u_{t} + \nabla \cdot f(u) = \varepsilon\,\nabla\cdot\left(B(u)\,\nabla u\right)&\mbox{in }\Omega_{T},\label{ibvp.parab.a} \\
    u(x,t)= 0&\,\,\,\,\mbox{on}\,\, \partial \Omega\times(0,T),\label{ibvp.parab.b}\\
u(x,0) = u_0(x)& x\in \Omega,\label{ibvp.parab.c}
\end{eqnarray}
\end{subequations}
\begin{theorem}[higher regularity]\label{chapHR85thm5}
Let $0 <\beta < 1$, $f\in\left(C^{4}(\mathbb{R})\right)^{d}$, $B\in C^{3}(\mathbb{R})\,\mbox{ with}\,\, B\geq r> 0$. 
Let $u_{0}\in C^{4+\beta}(\overline{\Omega})$ having compact essential support in $\Omega$. 
Then the solutions of the IBVP \eqref{ibvp.parab} belong to the space $C^{4+\beta,\frac{4 + \beta}{2}}(\overline{\Omega_{T}})$. Further,  $u_{tt}^{\varepsilon}\in C(\overline{\Omega_{T}})$.
\end{theorem}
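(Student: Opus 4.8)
The plan is to treat the quasilinear equation \eqref{ibvp.parab.a} as a uniformly parabolic \emph{linear} equation with coefficients frozen along the already-constructed solution, and then to bootstrap regularity by iterated application of linear parabolic Schauder estimates. First I would pass from divergence to non-divergence form: since $\nabla\cdot f(u)=f'(u)\cdot\nabla u$ and $\varepsilon\,\nabla\cdot\left(B(u)\nabla u\right)=\varepsilon B(u)\,\Delta u+\varepsilon B'(u)\,|\nabla u|^2$, equation \eqref{ibvp.parab.a} becomes
\begin{equation*}
u_t-\varepsilon B(u)\,\Delta u=\varepsilon B'(u)\,|\nabla u|^2-f'(u)\cdot\nabla u .
\end{equation*}
The hypothesis $B\geq r>0$ guarantees that the principal coefficient satisfies $\varepsilon B(u)\geq\varepsilon r>0$, so the associated operator $w\mapsto w_t-\varepsilon B(u(x,t))\Delta w$ is uniformly parabolic on $\overline{\Omega_T}$, which is the structural fact that powers every estimate below.

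Starting from the baseline regularity furnished by the existence theory (Hölder continuity of $u^\varepsilon$ together with its spatial gradient), I would read the equation as $u_t-a(x,t)\Delta u=F(x,t)$ with $a:=\varepsilon B(u)$ and $F:=\varepsilon B'(u)\,|\nabla u|^2-f'(u)\cdot\nabla u$. Because $f\in\left(C^4(\R)\right)^d$ and $B\in C^3(\R)$, composition with $u$ transfers the current Hölder regularity of $u$ to $a$ and $F$, with the quadratic term $|\nabla u|^2$ and the transport term $f'(u)\cdot\nabla u$ each costing one order of the gradient. A single linear Schauder estimate (interior and up to the boundary) then raises the regularity of $u$ by two spatial orders. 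Iterating, I expect to climb from $C^{1+\alpha,(1+\alpha)/2}$ through $C^{2+\alpha}$, $C^{3+\alpha}$ up to the target $C^{4+\beta,\frac{4+\beta}{2}}(\overline{\Omega_T})$; a direct accounting of the chain rule shows the ceiling is set precisely by $u_0\in C^{4+\beta}(\overline{\Omega})$ together with $f\in C^4$ and $B\in C^3$ (at the last step $a\in C^{2+\beta}$ and $F\in C^{2+\beta}$ suffice, and these are exactly what $u\in C^{3+\beta}$, $B\in C^3$, $f\in C^4$ deliver).

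The main obstacle is securing this regularity uniformly up to the full parabolic boundary, and in particular at the corner $\partial\Omega\times\{0\}$, where the Schauder estimates require compatibility conditions of increasing order relating the initial datum, the boundary datum, and the time derivatives $u_t,u_{tt},\dots$ computed from the equation. This is exactly where the hypothesis that $u_0$ has compact essential support in $\Omega$ is decisive: since $u_0$ and all its spatial derivatives vanish in a neighbourhood of $\partial\Omega$, each compatibility relation required at every bootstrap step is satisfied trivially. Verifying these relations order by order, and checking that the regularity gained at each stage feeds the hypotheses of the next, is the technical heart of the argument.

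Finally, for the assertion $u_{tt}^\varepsilon\in C(\overline{\Omega_T})$, I would either note that $u_{tt}$ is among the derivatives already controlled by membership in $C^{4+\beta,\frac{4+\beta}{2}}(\overline{\Omega_T})$, since $\tfrac{4+\beta}{2}=2+\tfrac{\beta}{2}\geq 2$; or, more directly, differentiate the equation once in time to obtain a linear parabolic equation for $v:=u^\varepsilon_t$ whose coefficients and source are controlled by the $C^{4+\beta}$ regularity just established. A further linear parabolic estimate then gives $v\in C^{2+\beta,1+\beta/2}(\overline{\Omega_T})$, whence $v_t=u_{tt}^\varepsilon$ is continuous up to the boundary, the corner compatibility again being supplied by the compact support of $u_0$.
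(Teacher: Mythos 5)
You should first be aware that the paper does not prove this theorem at all: it is imported verbatim from the preprint \cite{Ramesh} (cited as ``the following higher regularity result from [p.18]''), so there is no internal proof to compare your argument against; I can only judge your proposal on its merits. Your skeleton is the standard one and is structurally reasonable: pass to non-divergence form, use $\varepsilon B(u)\geq \varepsilon r>0$ for uniform parabolicity, bootstrap with linear parabolic Schauder estimates, note that compact support of $u_{0}$ trivializes the corner compatibility conditions of every order (this matches the paper's own remark that compatibility of orders $0,1,2$ holds for exactly this reason), and read off $u_{tt}^{\varepsilon}\in C(\overline{\Omega_{T}})$ from membership in $C^{4+\beta,\frac{4+\beta}{2}}(\overline{\Omega_{T}})$, which does control $\partial_{t}^{2}u$ in the usual parabolic H\"{o}lder convention.

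There are, however, two genuine gaps. First, the starting point of the bootstrap is assumed rather than obtained: for a quasilinear problem, getting from the class in which solutions are initially known (the paper's maximum principle speaks of solutions in $W(0,T)$, an energy class) up to $C^{1+\alpha,(1+\alpha)/2}$ or $C^{2+\alpha,1+\alpha/2}$ is the substantive part of such theorems, requiring De Giorgi--Nash--Moser type estimates and the Ladyzhenskaya--Solonnikov--Uraltseva gradient H\"{o}lder theory (or a Leray--Schauder construction that produces a classical solution directly); compressing this into ``baseline regularity furnished by the existence theory'' hides most of the work. Second, and more concretely, your H\"{o}lder bookkeeping at the top step is false as stated. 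You claim that $u\in C^{3+\beta}$, $B\in C^{3}$, $f\in C^{4}$ deliver $F=\varepsilon B'(u)|\nabla u|^{2}-f'(u)\cdot\nabla u\in C^{2+\beta}$. But $D_{x}^{2}\left(B'(u)|\nabla u|^{2}\right)$ contains the term $B'''(u)\,\partial_{k}u\,\partial_{l}u\,|\nabla u|^{2}$, and $B\in C^{3}$ only makes $B'''$ continuous; the composition of a merely continuous function with a Lipschitz $u$ need not be H\"{o}lder, so $F\in C^{2+\beta,\frac{2+\beta}{2}}$ does not follow. (The flux term is fine: only $f'''$ appears there, and $f\in C^{4}$ makes $f'''$ locally Lipschitz, hence $f'''(u)$ H\"{o}lder.) To close this you would need $B'''$ locally H\"{o}lder (e.g., $B\in C^{3,\beta}$ or $C^{4}$), or a restructuring that avoids differentiating the quadratic gradient term twice --- for instance the Kirchhoff transform $w=\int_{0}^{u}B(s)\,ds$, which eliminates $B'(u)|\nabla u|^{2}$ entirely, though the same composition issue then resurfaces in the regularity of the transformed initial datum $\int_{0}^{u_{0}}B(s)\,ds$. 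Since the source preprint is not reproduced here, one cannot tell how (or whether) it handles this point; but as written, your final step does not go through under the stated hypotheses.
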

Since $u_{0}\in L^{\infty}_{c}(\Omega)$, the function $u_{0\varepsilon}$ belongs to the space $C^{\infty}(\overline{\Omega})$ and also has compact support in $\Omega$ for sufficiently small $\varepsilon$. 
As a consequence the initial-boundary data of the regularized generalized viscosity problem \eqref{regularized.IBVP} satisfies 
compatibility conditions of orders $0,1,2$ which are required to apply the higher regularity result Theorem \ref{chapHR85thm5} to conclude 
the following result.
\begin{theorem}[Existence and Uniqueness of Solutions]\label{chapHR85thm5ABCD}
Let $f,\,B$ and $u_{0}$ be as in the Hypothesis D. Then the solutions of the IBVP \eqref{regularized.IBVP} belong to the space $C^{4+\beta,\frac{4 + \beta}{2}}(\overline{\Omega_{T}})$. Further,  $u_{tt}^{\varepsilon}\in C(\overline{\Omega_{T}})$.
\end{theorem}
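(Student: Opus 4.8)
The plan is to deduce Theorem 2.2 directly from the higher regularity result Theorem 2.1 by verifying that the hypotheses of the latter are met by the regularized data. The statement of Theorem 2.2 differs from Theorem 2.1 only in that the initial datum is now $u_{0\varepsilon}$ (a fixed mollification of $u_0$) rather than an arbitrary $C^{4+\beta}(\overline{\Omega})$ function; so the entire content of the proof is to check that $u_{0\varepsilon}$ lands in the regularity class required by Theorem 2.1 and that the structural hypotheses on $f$ and $B$ carry over verbatim.

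First I would observe that the assumptions on $f$ and $B$ in Hypothesis D are at least as strong as those in Theorem 2.1: Hypothesis D gives $f \in (C^4(\R))^d$ and $B \in C^3(\R) \cap L^\infty(\R)$ with $B \geq r > 0$, which immediately supplies the $f \in (C^4(\R))^d$ and $B \in C^3(\R)$, $B \geq r > 0$ demanded by Theorem 2.1. Next I would examine the initial datum. Since $u_0 \in L^\infty_c(\Omega)$, its essential support is a compact subset $K$ of the open set $\Omega$; choosing $\varepsilon$ smaller than $\mathrm{dist}(K, \partial\Omega)$ guarantees that $u_{0\varepsilon} = u_0 * \rho_\varepsilon$ has support contained in a compact subset of $\Omega$. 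Because $u_0 \in L^\infty(\Omega) \subset L^1_{loc}$ and $\rho_\varepsilon \in C^\infty_c$, the convolution $u_{0\varepsilon}$ is smooth, i.e. $u_{0\varepsilon} \in C^\infty(\overline{\Omega})$, and in particular $u_{0\varepsilon} \in C^{4+\beta}(\overline{\Omega})$ for every $0 < \beta < 1$, with compact essential support in $\Omega$. This verifies precisely the datum hypothesis of Theorem 2.1 with $u_0$ replaced by $u_{0\varepsilon}$.

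The one substantive point, which the excerpt already flags in the paragraph preceding the theorem, is the verification of the compatibility conditions of orders $0,1,2$ at the corner $\partial\Omega \times \{0\}$; these are what allow the parabolic higher-regularity machinery to produce a solution up to $\overline{\Omega_T}$ rather than merely in the interior. I would argue that because $u_{0\varepsilon}$ has compact support strictly inside $\Omega$, it vanishes identically in a neighborhood of $\partial\Omega$ together with all of its spatial derivatives; consequently the boundary condition $u = 0$ on $\partial\Omega \times (0,T)$ and the equation itself are trivially matched to all the required orders on $\partial\Omega \times \{0\}$, so the compatibility conditions of orders $0,1,2$ hold automatically. I expect this compatibility check to be the main (and essentially the only) obstacle: one must confirm that the time-derivatives of $u^\varepsilon$ computed from the PDE, when restricted to the boundary at $t=0$, agree with the time-derivatives of the prescribed zero boundary data, and the compact-support property of $u_{0\varepsilon}$ is exactly what makes each of these vanish.

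With all hypotheses of Theorem 2.1 verified for the data $(f, B, u_{0\varepsilon})$, I would simply invoke Theorem 2.1 to conclude that the solution $u^\varepsilon$ of \eqref{regularized.IBVP} belongs to $C^{4+\beta, \frac{4+\beta}{2}}(\overline{\Omega_T})$ and that $u^\varepsilon_{tt} \in C(\overline{\Omega_T})$, which is exactly the assertion of Theorem 2.2; existence follows from the higher-regularity construction of \cite{Ramesh} and uniqueness from the standard maximum-principle/energy argument for this quasilinear parabolic problem, applicable since $B \geq r > 0$ renders the equation uniformly parabolic on the range of $u^\varepsilon$.
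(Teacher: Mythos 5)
Your proposal is correct and follows essentially the same route as the paper, whose entire argument is the paragraph preceding the theorem: mollification of $u_0 \in L^\infty_c(\Omega)$ yields $u_{0\varepsilon} \in C^\infty(\overline{\Omega})$ with compact support in $\Omega$ for small $\varepsilon$, hence the compatibility conditions of orders $0,1,2$ hold and Theorem \ref{chapHR85thm5} applies. Your write-up is in fact more explicit than the paper's, particularly in spelling out why compact support of $u_{0\varepsilon}$ makes the corner compatibility conditions trivial.
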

We now recall the maximum principle of generalized viscosity problem \eqref{ibvp.parab} from \cite[p.12]{Ramesh}, {\it i.e.,}
\begin{theorem}[Maximum principle]\label{chap3thm1}
Let $f:\mathbb{R}\to\mathbb{R}^{d}$ be a $C^{1}$ function and $u_{0}\in L^{\infty}(\Omega)$. Then any solution $u$ of generalized viscosity problem \eqref{ibvp.parab} in $W(0,T)$  satisfies the bound
\begin{equation}\label{eqnchap303}
||u^{\varepsilon}(\cdot,t)||_{L^{\infty}(\Omega)}\hspace{0.1cm}\leq\hspace{0.1cm}||u_{0}||_{L^{\infty}(\Omega)}\hspace{0.1cm}a.e.\,\,t\in(0,T).
\end{equation}
\end{theorem}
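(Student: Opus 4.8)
The plan is to establish the $L^\infty$ bound by a Stampacchia-type truncation (energy) argument applied to the weak formulation; this is robust enough to work for solutions merely in $W(0,T)$, and does not require the higher regularity of Theorem~\ref{chapHR85thm5ABCD}. Write $M := \|u_0\|_{L^\infty(\Omega)}$ and note $M \geq 0$. I would prove the two one-sided bounds $u^\varepsilon \leq M$ and $u^\varepsilon \geq -M$ separately; I describe the upper bound, the lower one being entirely symmetric (apply the same reasoning to $-u^\varepsilon$, which solves a problem of the same structure).

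First I would take $w := (u^\varepsilon - M)^+ = \max\{u^\varepsilon - M,\,0\}$ as a test function. Since $u^\varepsilon = 0 \leq M$ on $\partial\Omega$, we have $w(\cdot,t) \in H^1_0(\Omega)$ for a.e.\ $t$, and since $u_0 \leq \|u_0\|_{L^\infty(\Omega)} = M$ a.e.\ we get $w(\cdot,0) = 0$. Testing the weak form of \eqref{ibvp.parab.a} against $w$ produces three terms. For the time-derivative term I would invoke the standard chain rule for truncations of functions in $W(0,T)$ to write $\langle u^\varepsilon_t, w\rangle = \tfrac12\frac{d}{dt}\|w(\cdot,t)\|_{L^2(\Omega)}^2$. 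For the diffusion term, $\nabla w = \mathbf{1}_{\{u^\varepsilon > M\}}\nabla u^\varepsilon$ gives $-\varepsilon\int_\Omega B(u^\varepsilon)\nabla u^\varepsilon\cdot\nabla w\,dx = -\varepsilon\int_{\{u^\varepsilon>M\}}B(u^\varepsilon)|\nabla u^\varepsilon|^2\,dx \leq 0$, where the uniform ellipticity $B \geq r > 0$ is exactly what guarantees the correct sign.

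The crux, and what I expect to be the main obstacle, is the convection term $\int_\Omega f(u^\varepsilon)\cdot\nabla w\,dx$, which I would exhibit as the integral of a divergence of a $W^{1,1}_0$ vector field and hence show to vanish. Concretely, setting $\Xi_j(s) := \int_M^{\max(s,M)} f_j(\sigma)\,d\sigma$ defines $C^1$ functions with $\Xi_j \equiv 0$ on $\{s \leq M\}$ and $\Xi_j'(s) = \mathbf{1}_{\{s > M\}}f_j(s)$, so that $\mathbf{1}_{\{u^\varepsilon>M\}}f_j(u^\varepsilon)\,\partial_{x_j}u^\varepsilon = \partial_{x_j}\big(\Xi_j(u^\varepsilon)\big)$; since $u^\varepsilon$ vanishes on $\partial\Omega$ each $\Xi_j(u^\varepsilon)$ lies in $W^{1,1}_0(\Omega)$, whence $\int_\Omega f(u^\varepsilon)\cdot\nabla w\,dx = \sum_j\int_\Omega\partial_{x_j}\big(\Xi_j(u^\varepsilon)\big)\,dx = 0$. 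The delicate point is the integrability needed to make this rigorous for $W(0,T)$ solutions, which is supplied by the Lipschitz/linear-growth bound on $f$ (ensuring $f(u^\varepsilon)\nabla u^\varepsilon \in L^1$ via Cauchy--Schwarz) together with the boundary condition.

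Combining the three contributions yields $\tfrac12\frac{d}{dt}\|w(\cdot,t)\|_{L^2(\Omega)}^2 \leq 0$ for a.e.\ $t$. Integrating in time and using $w(\cdot,0) = 0$ gives $\|w(\cdot,t)\|_{L^2(\Omega)} = 0$ for a.e.\ $t$, i.e.\ $(u^\varepsilon - M)^+ = 0$, which is the desired bound $u^\varepsilon(\cdot,t) \leq M$ a.e. The symmetric argument with the truncation $(-u^\varepsilon - M)^+$ delivers $u^\varepsilon(\cdot,t) \geq -M$, and together these give the estimate \eqref{eqnchap303}.
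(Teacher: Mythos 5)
The paper does not actually prove Theorem \ref{chap3thm1}: it recalls the statement from the preprint \cite[p.12]{Ramesh}, so there is no in-paper proof to compare yours against. Taken on its own terms, your Stampacchia truncation argument is the standard (and correct) route to this bound for solutions in $W(0,T)$: testing with $w=(u^{\varepsilon}-M)^{+}$, using $\langle u^{\varepsilon}_{t},w\rangle=\tfrac12\tfrac{d}{dt}\|w(\cdot,t)\|_{L^{2}(\Omega)}^{2}$, discarding the diffusion term by the ellipticity $B\geq r>0$, and killing the convection term by exhibiting it as the integral of the divergence of a zero-trace $W^{1,1}$ field. This is almost certainly the same circle of ideas as in the cited reference, and it has the virtue of being self-contained and of not requiring the classical regularity of Theorem \ref{chapHR85thm5ABCD}.

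Two technical points should be repaired before the argument is airtight. First, the functions $\Xi_{j}$ are not $C^{1}$: at $s=M$ the one-sided derivatives are $0$ and $f_{j}(M)$, so $\Xi_{j}$ is only (locally) Lipschitz. This is harmless---the chain rule for Lipschitz compositions of Sobolev functions still gives $\partial_{x_{j}}\bigl(\Xi_{j}(u^{\varepsilon})\bigr)=\mathbf{1}_{\{u^{\varepsilon}>M\}}f_{j}(u^{\varepsilon})\,\partial_{x_{j}}u^{\varepsilon}$ a.e., the ambiguity on the level set $\{u^{\varepsilon}=M\}$ being irrelevant because $\nabla u^{\varepsilon}=0$ a.e. there---but the claim as written is false unless $f_{j}(M)=0$. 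Second, the growth control you invoke is not available under the hypotheses of the theorem as stated, which assumes only $f\in C^{1}$; since $u^{\varepsilon}$ is not yet known to be bounded at this stage, you need $f^{\prime}\in\left(L^{\infty}(\mathbb{R})\right)^{d}$ from Hypothesis D (or, alternatively, a preliminary truncation of $f$, removed once the bound is established) to guarantee $f(u^{\varepsilon})\in\left(L^{2}(\Omega)\right)^{d}$, that $\Xi_{j}^{\prime}$ has linear growth, and hence that $\Xi_{j}(u^{\varepsilon})\in W^{1,1}_{0}(\Omega)$ so that its divergence integrates to zero. With these two (standard) repairs, the proof is complete.
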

Applying Theorem \ref{chap3thm1} to regularized generalized viscosity problem \eqref{regularized.IBVP} and using $\|u_{0\varepsilon}\|_{L^{\infty}(\Omega)}
\leq \|u_{0}\|_{L^{\infty}(\Omega)}$, we get the following maximum principle
\begin{theorem}[Maximum principle]\label{regularized.chap3thm1}
Let $f,\,B,\,$ and $u_{0}$ be as in Hypothesis D. Then any solution $u$ of generalized viscosity problem \eqref{regularized.IBVP}  satisfies the bound
\begin{equation}\label{regularized.eqnchap303}
||u^{\varepsilon}(\cdot,t)||_{L^{\infty}(\Omega)}\hspace{0.1cm}\leq\hspace{0.1cm}||u_{0}||_{L^{\infty}(\Omega)}\hspace{0.1cm}a.e.\,\,t\in(0,T).
\end{equation}
\end{theorem}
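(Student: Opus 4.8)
The plan is to treat Theorem~\ref{regularized.chap3thm1} as a direct specialization of the already-established maximum principle Theorem~\ref{chap3thm1}. The only structural observation needed is that the regularized problem \eqref{regularized.IBVP} is literally an instance of the generalized viscosity problem \eqref{ibvp.parab}: the two systems share the same PDE \eqref{ibvp.parab.a}=\eqref{regularized.IBVP.a} and the same homogeneous boundary condition, and they differ only in that the initial datum $u_0$ appearing in \eqref{ibvp.parab.c} is replaced by its mollification $u_{0\varepsilon}=u_0\ast\rho_\varepsilon$ in \eqref{regularized.IBVP.c}. Thus no new PDE analysis is required; everything reduces to verifying the hypotheses of Theorem~\ref{chap3thm1} for the regularized data and then controlling the mollified datum.

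First I would check that the hypotheses of Theorem~\ref{chap3thm1} hold for \eqref{regularized.IBVP}. Under Hypothesis D we have $f\in\left(C^4(\R)\right)^d$, so in particular $f$ is a $C^1$ map as required. Since $u_0\in L^\infty_c(\Omega)$, its mollification $u_{0\varepsilon}$ lies in $C^\infty(\overline{\Omega})\subset L^\infty(\Omega)$, so the initial datum of \eqref{regularized.IBVP} meets the $L^\infty$ hypothesis of Theorem~\ref{chap3thm1}. Applying that theorem with initial datum $u_{0\varepsilon}$ then yields, for a.e.\ $t\in(0,T)$,
\begin{equation*}
\|u^\varepsilon(\cdot,t)\|_{L^\infty(\Omega)}\;\leq\;\|u_{0\varepsilon}\|_{L^\infty(\Omega)}.
\end{equation*}

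It remains to pass from $\|u_{0\varepsilon}\|_{L^\infty(\Omega)}$ to $\|u_0\|_{L^\infty(\Omega)}$. Extending $u_0$ by zero outside $\Omega$, which preserves its $L^\infty$ norm, the standard mollifier estimate gives $\|u_{0\varepsilon}\|_{L^\infty}\leq\|u_0\|_{L^\infty}$: this is Young's convolution inequality $\|u_0\ast\rho_\varepsilon\|_{L^\infty}\leq\|u_0\|_{L^\infty}\,\|\rho_\varepsilon\|_{L^1}$ combined with the facts that $\rho_\varepsilon\geq 0$ and $\int_{\R^d}\rho_\varepsilon=1$, whence $\|\rho_\varepsilon\|_{L^1}=1$. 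Chaining the two inequalities produces $\|u^\varepsilon(\cdot,t)\|_{L^\infty(\Omega)}\leq\|u_0\|_{L^\infty(\Omega)}$ for a.e.\ $t\in(0,T)$, which is the claim. There is no genuine obstacle here: the statement is essentially a corollary, and the only point requiring any care is the mollifier bound, which is entirely routine.
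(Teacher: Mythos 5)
Your proposal is correct and follows exactly the paper's own route: the paper likewise obtains Theorem~\ref{regularized.chap3thm1} by applying Theorem~\ref{chap3thm1} to problem \eqref{regularized.IBVP} with datum $u_{0\varepsilon}$ and then invoking the mollifier bound $\|u_{0\varepsilon}\|_{L^{\infty}(\Omega)}\leq\|u_{0}\|_{L^{\infty}(\Omega)}$ (stated in the paper as \eqref{regularized.max.eqn1}). You merely spell out the hypothesis verification and the Young's-inequality justification of that bound, which the paper leaves implicit.
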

Applying Theorem 4.2 from \cite[p.30]{Ramesh} to quasilinear viscous approximations $\left(u^{\varepsilon}\right)$ as asserted in
Theorem \ref{chapHR85thm5ABCD}, we obtain
\begin{theorem}\label{Compactness.lemma.1}
Let $f,\,\,B,\,\,u_{0}$ satisfy Hypothesis D. Let $u^{\varepsilon}$ be the unique solution to generalized 
viscosity problem \eqref{regularized.IBVP}. Then 
 \begin{eqnarray}\label{uniformnot.compactness.eqn1a}
 \displaystyle\sum_{j=1}^{d} \hspace{0.1cm}\left(\sqrt{\varepsilon}\Big\| \frac{\partial u^{\varepsilon}}{\partial x_{j}}\Big\|_{L^{2}(\Omega_{T})}\right)^{2} \leq\frac{1}{2r}\|u_{0\varepsilon}\|^{2}_{L^{2}(\Omega)}\leq\frac{1}{2r}
 \|u_{0}\|^{2}_{L^{\infty}(\Omega)}\,\,\mbox{Vol}(\Omega).
 \end{eqnarray}
\end{theorem}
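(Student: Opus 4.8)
The plan is to establish the estimate by the classical $L^2$ energy method, testing the viscosity equation \eqref{regularized.IBVP.a} against the solution $u^\varepsilon$ itself. The regularity guaranteed by Theorem \ref{chapHR85thm5ABCD}, namely $u^\varepsilon \in C^{4+\beta,(4+\beta)/2}(\overline{\Omega_{T}})$, is far more than enough to justify every integration by parts and the differentiation under the integral sign used below.

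First I would multiply \eqref{regularized.IBVP.a} by $u^\varepsilon$ and integrate over $\Omega$. The time-derivative term becomes $\tfrac12 \frac{d}{dt}\int_\Omega (u^\varepsilon)^2\,dx$. For the convection term I would exploit its divergence structure: writing $\nabla\cdot f(u^\varepsilon)=\sum_j f_j'(u^\varepsilon)\,\partial_{x_j}u^\varepsilon$ and introducing the primitives $g_j(\lambda):=\int_0^\lambda s\,f_j'(s)\,ds$, one checks that $u^\varepsilon\,\nabla\cdot f(u^\varepsilon)=\nabla\cdot g(u^\varepsilon)$, so that $\int_\Omega u^\varepsilon\,\nabla\cdot f(u^\varepsilon)\,dx$ reduces to a boundary integral of $g(u^\varepsilon)$ over $\partial\Omega$. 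Since $u^\varepsilon=0$ on $\partial\Omega$ and $g_j(0)=0$, this term vanishes. For the viscous term, integrating by parts once and again using the homogeneous boundary condition removes the boundary contribution and leaves $-\varepsilon\int_\Omega B(u^\varepsilon)\,|\nabla u^\varepsilon|^2\,dx$.

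Combining these computations yields the pointwise-in-time identity $\tfrac12\frac{d}{dt}\|u^\varepsilon(\cdot,t)\|_{L^2(\Omega)}^2 = -\varepsilon\int_\Omega B(u^\varepsilon)\,|\nabla u^\varepsilon|^2\,dx$. I would integrate this in $t$ over $(0,T)$, discard the nonnegative terminal term $\tfrac12\|u^\varepsilon(\cdot,T)\|_{L^2(\Omega)}^2$, and invoke the lower bound $B\geq r$ from Hypothesis D to obtain $\varepsilon r\sum_{j=1}^d\|\partial u^\varepsilon/\partial x_j\|_{L^2(\Omega_{T})}^2\leq\tfrac12\|u_{0\varepsilon}\|_{L^2(\Omega)}^2$, which is the first asserted inequality after dividing by $r$ and rewriting the left-hand side. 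The final inequality is then immediate: $\|u_{0\varepsilon}\|_{L^2(\Omega)}^2\leq\|u_{0\varepsilon}\|_{L^\infty(\Omega)}^2\,\mbox{Vol}(\Omega)\leq\|u_0\|_{L^\infty(\Omega)}^2\,\mbox{Vol}(\Omega)$, using that mollification does not increase the sup-norm, the same fact underlying the maximum principle Theorem \ref{regularized.chap3thm1}.

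The only genuinely delicate point is the treatment of the convection term: one must recognize that testing against $u^\varepsilon$ turns $u^\varepsilon\,\nabla\cdot f(u^\varepsilon)$ into an exact divergence whose flux $g$ vanishes at $u^\varepsilon=0$, so that the homogeneous Dirichlet condition kills the boundary contribution with no sign hypothesis on $f$. Everything else is a routine energy computation, fully justified by the classical regularity supplied by Theorem \ref{chapHR85thm5ABCD}.
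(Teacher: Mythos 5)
Your proof is correct, and it is worth noting how it relates to the paper: the paper does not actually prove Theorem \ref{Compactness.lemma.1} at all --- it obtains the estimate by citing Theorem 4.2 of the preprint \cite{Ramesh} and applying it to the solutions produced by Theorem \ref{chapHR85thm5ABCD}. Your argument supplies the missing self-contained derivation, and it is the standard one: test the equation with $u^\varepsilon$, observe that $u^\varepsilon\,\nabla\cdot f(u^\varepsilon)=\nabla\cdot g(u^\varepsilon)$ with $g_j(\lambda)=\int_0^\lambda s f_j'(s)\,ds$ so the convection term integrates to a vanishing boundary flux (since $g_j(0)=0$ and $u^\varepsilon=0$ on $\partial\Omega\times(0,T)$), integrate the viscous term by parts to get $-\varepsilon\int_\Omega B(u^\varepsilon)|\nabla u^\varepsilon|^2\,dx$, use $B\geq r$, integrate in time, and discard the nonnegative terminal energy. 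All the constants come out exactly as in \eqref{uniformnot.compactness.eqn1a}, including the factor $\tfrac{1}{2r}$, and the final bound $\|u_{0\varepsilon}\|_{L^2(\Omega)}^2\leq\|u_0\|_{L^\infty(\Omega)}^2\,\mathrm{Vol}(\Omega)$ follows from $\|u_{0}\ast\rho_\varepsilon\|_{L^\infty}\leq\|u_0\|_{L^\infty}$. The regularity from Theorem \ref{chapHR85thm5ABCD} indeed justifies every step. What your approach buys is independence from the unpublished reference: the estimate becomes verifiable within the paper itself, with the only inputs being the classical regularity theorem and Hypothesis D.
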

\section{Compactness of quasilinear viscous approximations}
In this section, we want to show the existence of a subsequence $\left\{u^{\varepsilon_{k}}\right\}_{k=1}^{\infty}$ of sequence of solutions $\left\{u_{\varepsilon}\right\}_{\varepsilon \geq 0}$ to generalized viscosity problem \eqref{ibvp.parab} and a function $u$ in $L^{\infty}(\Omega_{T})$ such that for {\it a.e.} $(x,t)\in \Omega_{T}$, we have
\begin{equation*}
u^{\varepsilon_{k}}(x,t)\to u(x,t)\hspace{0.2cm}{\it a.e.}\,\,(x,t)\in\Omega_{T}\,\,\,\mbox{as}\,\,k\to\infty.
\end{equation*}
The following result shows that the quasilinear viscous approximations $\left(u^{\varepsilon}\right)$ satisfies compact entropy
productions, {\it i.e.,}
\begin{theorem}\label{chap9thm2}
{\rm Assume \textbf{Hypothesis D} and let $\left\{u^{\varepsilon}\right\}$ be as in Theorem \ref{chapHR85thm5ABCD}.
Then 
 \begin{equation}\label{chap9eqn2}
  \frac{\partial \eta(u^{\varepsilon})}{\partial t} + \displaystyle\sum_{j=1}^{d}\frac{\partial q_{j}(u^{\varepsilon})}{\partial x_{j}}\hspace{0.2cm}\subset\hspace{0.1cm}\mbox{compact set in }\hspace{0.2cm} H^{-1}(\Omega_{T})
 \end{equation}
for every $C^{2}(\mathbb{R})$ entropy-entropy flux pair $(\eta, q)$ of conservation laws \eqref{ivp.cl.a}.}
\end{theorem}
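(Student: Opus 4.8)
The plan is to establish \eqref{chap9eqn2} by the compensated compactness (Murat--Tartar) machinery: I will cast the entropy production as a sum of two pieces, one converging strongly to zero in $H^{-1}(\Omega_T)$ and one that stays bounded in the space of Radon measures $\mathcal{M}(\Omega_T)$, while showing the full expression remains bounded in $W^{-1,p}(\Omega_T)$ for some $p>2$. Murat's $H^{-1}$-compactness lemma then forces the whole quantity into a compact subset of $H^{-1}(\Omega_T)$.

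First I would fix a $C^2$ entropy--entropy flux pair $(\eta,q)$, normalized by $q_j'=\eta' f_j'$; since $f\in\left(C^4(\R)\right)^d$ this flux $q$ is well defined and of class $C^2$. Because each $u^\varepsilon$ is a classical solution (Theorem \ref{chapHR85thm5ABCD}), I can multiply \eqref{regularized.IBVP.a} by $\eta'(u^\varepsilon)$, use the chain rule on the left, and apply the product-rule rearrangement $\eta'(u^\varepsilon)\nabla\cdot(B(u^\varepsilon)\nabla u^\varepsilon)=\nabla\cdot\big(\eta'(u^\varepsilon)B(u^\varepsilon)\nabla u^\varepsilon\big)-\eta''(u^\varepsilon)B(u^\varepsilon)|\nabla u^\varepsilon|^2$ on the right, obtaining the pointwise identity
\[
\frac{\partial \eta(u^\varepsilon)}{\partial t}+\sum_{j=1}^d\frac{\partial q_j(u^\varepsilon)}{\partial x_j}
=\underbrace{\varepsilon\,\nabla\cdot\big(\eta'(u^\varepsilon)B(u^\varepsilon)\nabla u^\varepsilon\big)}_{=:\,I_\varepsilon}
\;-\;\underbrace{\varepsilon\,\eta''(u^\varepsilon)B(u^\varepsilon)|\nabla u^\varepsilon|^2}_{=:\,II_\varepsilon}.
\]

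Next come the uniform estimates. By the maximum principle (Theorem \ref{regularized.chap3thm1}) every $u^\varepsilon$ takes values in the fixed compact interval $I$, so $\eta',\eta''$ and $B$ are uniformly bounded along the family. For $I_\varepsilon$ I would write it as the spatial divergence of the field $V_\varepsilon:=\varepsilon\,\eta'(u^\varepsilon)B(u^\varepsilon)\nabla u^\varepsilon=\sqrt{\varepsilon}\,\big(\sqrt{\varepsilon}\,\eta'(u^\varepsilon)B(u^\varepsilon)\nabla u^\varepsilon\big)$; Theorem \ref{Compactness.lemma.1} bounds $\sqrt{\varepsilon}\,\nabla u^\varepsilon$ in $L^2(\Omega_T)$ uniformly, so $V_\varepsilon\to 0$ strongly in $\left(L^2(\Omega_T)\right)^d$ and hence $I_\varepsilon\to 0$ in $H^{-1}(\Omega_T)$, making $\{I_\varepsilon\}$ relatively compact there. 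For $II_\varepsilon$ the same two inputs give $\|II_\varepsilon\|_{L^1(\Omega_T)}\le \|\eta''\|_{L^\infty(I)}\|B\|_{L^\infty(I)}\,\varepsilon\,\|\nabla u^\varepsilon\|_{L^2(\Omega_T)}^2$, uniformly bounded again by Theorem \ref{Compactness.lemma.1}, so $\{II_\varepsilon\}$ is bounded in $L^1(\Omega_T)\hookrightarrow\mathcal{M}(\Omega_T)$. Finally, since $\eta(u^\varepsilon)$ and $q_j(u^\varepsilon)$ are uniformly bounded in $L^\infty(\Omega_T)$, the left-hand side is a first-order distributional derivative of uniformly $L^p$-bounded functions, hence bounded in $W^{-1,p}(\Omega_T)$ for every finite $p$, in particular for some $p>2$.

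With the decomposition (left-hand side) $=I_\varepsilon-II_\varepsilon$ in hand, I would invoke Murat's lemma: a family bounded in $W^{-1,p}(\Omega_T)$ with $p>2$ that splits as (relatively compact in $H^{-1}$) plus (bounded in $\mathcal{M}$) is relatively compact in $H^{-1}(\Omega_T)$, which is exactly \eqref{chap9eqn2}. The main obstacle is bookkeeping rather than conceptual: I must check the functional-analytic hypotheses on the space-time domain $\Omega_T\subset\R^{d+1}$, namely the embedding $\mathcal{M}(\Omega_T)\hookrightarrow W^{-1,s}(\Omega_T)$ for $s<\tfrac{d+1}{d}$ together with the interpolation against the $W^{-1,p}$ bound, and confirm that the genuine $\sqrt{\varepsilon}$ gain in $V_\varepsilon$ is what drives $I_\varepsilon\to 0$ strongly. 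Everything else reduces to the two a priori bounds already secured, namely the $L^\infty$ maximum principle and the $\sqrt{\varepsilon}\,\nabla u^\varepsilon$ energy estimate.
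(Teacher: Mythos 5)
Your proposal is correct and follows essentially the same route as the paper: the identical decomposition of the entropy production into $\varepsilon\,\nabla\cdot\bigl(B(u^{\varepsilon})\eta'(u^{\varepsilon})\nabla u^{\varepsilon}\bigr)$ (driven to zero in $H^{-1}(\Omega_{T})$ by the $\sqrt{\varepsilon}\,\nabla u^{\varepsilon}$ energy estimate of Theorem \ref{Compactness.lemma.1}) minus the dissipation term $\varepsilon\,\eta''(u^{\varepsilon})B(u^{\varepsilon})|\nabla u^{\varepsilon}|^{2}$ (bounded in $L^{1}(\Omega_{T})\hookrightarrow M(\Omega_{T})$), followed by Murat's Lemma \ref{chap9lem2}. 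If anything, you are slightly more thorough than the paper, since you explicitly verify the $W^{-1,p}(\Omega_{T})$, $p>2$, boundedness hypothesis of Murat's lemma via the uniform $L^{\infty}$ bounds on $\eta(u^{\varepsilon})$ and $q_{j}(u^{\varepsilon})$, a step the paper leaves implicit.
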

The following result is used to prove \eqref{chap9eqn2}.
\begin{lemma}{\rm\cite[p.514]{MR2169977}}\label{chap9lem2}
 {\rm Let $\Omega$ be an open subset of $\mathbb{R}^{d}$ and $\left\{\phi_{n}\right\}$ be a bounded sequence in 
 $W^{-1,p}(\Omega)$, for some $p > 2$. Further, let $\phi_{n}= \xi_{n} + \psi_{n}$, where $\left\{\xi_{n}\right\}$ 
 lies in a compact set of $H^{-1}(\Omega)$, while $\left\{\psi_{n}\right\}$ lies in a bounded set of the space of measures
 $M(\Omega)$. Then $\left\{\phi_{n}\right\}$ lies in a compact set of $H^{-1}(\Omega)$.} 
\end{lemma}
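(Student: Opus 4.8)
The plan is to prove the result by interpolation between negative Sobolev spaces, after first converting the ``measure part'' $\{\psi_{n}\}$ into a statement about precompactness in a negative Sobolev space of low order. The whole sequence $\{\phi_{n}\}$ will then be seen to be precompact in some $W^{-1,s}(\Omega)$ with $s<2$, and an interpolation inequality played against the assumed boundedness in $W^{-1,p}(\Omega)$ with $p>2$ will push this up to precompactness in $H^{-1}(\Omega)=W^{-1,2}(\Omega)$. Throughout I take $\Omega$ bounded, as it is in the application where the domain is $\Omega_{T}$; the general open case is to be read after reduction to bounded subdomains, since the measure embedding below genuinely needs boundedness.

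First I would record a compact embedding of measures into a negative Sobolev space. Fix an exponent $s\in(1,2)$ with conjugate $s'=\tfrac{s}{s-1}>d$; such $s$ exists for every $d\ge 1$ (for $d\ge 2$ because $1<\tfrac{d}{d-1}\le 2$, and for $d=1$ any $s\in(1,2)$ works). By Morrey's inequality $W^{1,s'}_{0}(\Omega)\hookrightarrow C^{0,\alpha}(\overline{\Omega})$ for some $\alpha>0$, and by Arzel\`a--Ascoli this embedding into $C_{0}(\Omega)$ is compact. Dualising, via Schauder's theorem on adjoints of compact operators, gives a compact inclusion $M(\Omega)=\big(C_{0}(\Omega)\big)^{*}\hookrightarrow \big(W^{1,s'}_{0}(\Omega)\big)^{*}=W^{-1,s}(\Omega)$. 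Hence the bounded sequence $\{\psi_{n}\}\subset M(\Omega)$ is precompact in $W^{-1,s}(\Omega)$. Since $s<2$, on a bounded domain one has the continuous inclusion $H^{-1}(\Omega)\hookrightarrow W^{-1,s}(\Omega)$, dual to $W^{1,s'}_{0}\hookrightarrow H^{1}_{0}$, so the precompact sequence $\{\xi_{n}\}\subset H^{-1}(\Omega)$ is also precompact in $W^{-1,s}(\Omega)$. Adding, the full sequence $\{\phi_{n}\}=\{\xi_{n}+\psi_{n}\}$ is precompact in $W^{-1,s}(\Omega)$.

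The second ingredient is the interpolation inequality for negative Sobolev spaces: the family $\{W^{-1,r}(\Omega)\}$ is an interpolation scale, so there exist $\theta\in(0,1)$ with $\tfrac12=\tfrac{\theta}{s}+\tfrac{1-\theta}{p}$, which exists precisely because $s<2<p$, and a constant $C$ with $\|v\|_{H^{-1}(\Omega)}\le C\,\|v\|_{W^{-1,s}(\Omega)}^{\theta}\,\|v\|_{W^{-1,p}(\Omega)}^{1-\theta}$ for every $v$; equivalently $[W^{-1,s},W^{-1,p}]_{\theta}=H^{-1}$ by duality of complex interpolation. I would finish by a Cauchy-sequence argument: given any subsequence of $\{\phi_{n}\}$, precompactness in $W^{-1,s}$ yields a further subsequence that is Cauchy in $W^{-1,s}(\Omega)$; applying the interpolation inequality to the differences $\phi_{m}-\phi_{n}$, and using that $\|\phi_{m}-\phi_{n}\|_{W^{-1,p}}$ stays bounded by twice the uniform $W^{-1,p}$-bound, shows this subsequence is Cauchy in $H^{-1}(\Omega)$ and hence convergent there by completeness. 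As every subsequence has an $H^{-1}$-convergent sub-subsequence, $\{\phi_{n}\}$ is precompact in $H^{-1}(\Omega)$.

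The main obstacle is organising the interpolation correctly rather than any single hard estimate: one must realise that it is the full sequence $\{\phi_{n}\}$, and not $\{\psi_{n}\}$ alone, that should be shown precompact in the low space $W^{-1,s}$, and that the hypothesis $p>2$ is exactly the extra integrability above the $H^{-1}$ scale needed to compensate, through interpolation, for the loss incurred by controlling the measure part only in $W^{-1,s}$ with $s<2$. The remaining care lies in justifying the two functional-analytic facts, namely the compactness of $M(\Omega)\hookrightarrow W^{-1,s}(\Omega)$ via Morrey and Arzel\`a--Ascoli and the interpolation identity $[W^{-1,s},W^{-1,p}]_{\theta}=H^{-1}$; both are standard but both rely on $\Omega$ being bounded.
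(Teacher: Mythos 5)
The paper contains no proof of this statement for you to be compared against: it is Murat's compactness lemma, quoted with a citation to Dafermos \cite{MR2169977}, and in fact quoted imprecisely --- the version in that reference (and the version that is true at the stated generality) concludes that $\{\phi_{n}\}$ lies in a compact set of $H^{-1}_{loc}(\Omega)$, not of $H^{-1}(\Omega)$. Read literally, the statement above is false for unbounded $\Omega$: on $\Omega=\mathbb{R}^{d}$ take $\xi_{n}=0$ and $\phi_{n}=\psi_{n}=g(\cdot-ne_{1})$ with $g\in C^{\infty}_{c}$, $g\geq 0$, $\int g=1$; this sequence is bounded in $M(\mathbb{R}^{d})$ and in every $W^{-1,p}(\mathbb{R}^{d})$, yet has constant positive $H^{-1}$-norm and weak limit $0$, so it has no $H^{-1}$-convergent subsequence. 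So your instinct to restrict to bounded $\Omega$ was sound and necessary, not merely convenient. Within that restriction, your architecture is exactly the standard one behind the citation: the measure part is absorbed through the compact embedding $M(\Omega)\hookrightarrow W^{-1,s}(\Omega)$ for $s'>d$ (Morrey, Arzel\`a--Ascoli, Schauder duality --- this part is correct, and needs no boundary regularity since $W^{1,s'}_{0}$ extends by zero), the $H^{-1}$-compact part rides along via $H^{-1}\hookrightarrow W^{-1,s}$, and the low-exponent precompactness is traded against the $W^{-1,p}$ bound by interpolation; the closing Cauchy-subsequence argument is fine.

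The genuine gap is the step you dismiss as standard: $[W^{-1,s}(\Omega),W^{-1,p}(\Omega)]_{\theta}=H^{-1}(\Omega)$. After the duality theorem for complex interpolation, this amounts to $[W^{1,s'}_{0}(\Omega),W^{1,p'}_{0}(\Omega)]_{\theta}=H^{1}_{0}(\Omega)$. The inclusion $[W^{1,s'}_{0},W^{1,p'}_{0}]_{\theta}\subseteq H^{1}_{0}$ is harmless (componentwise from $[L^{s'},L^{p'}]_{\theta}=L^{2}$), but your inequality $\|v\|_{H^{-1}}\leq C\|v\|^{\theta}_{W^{-1,s}}\|v\|^{1-\theta}_{W^{-1,p}}$ requires the hard inclusion $H^{1}_{0}\subseteq[W^{1,s'}_{0},W^{1,p'}_{0}]_{\theta}$. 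That is a real theorem about the boundary: the known proofs go through odd reflection across a flattened boundary, or Seeley-type interpolation with boundary conditions, and thus need $\partial\Omega$ at least Lipschitz; mere boundedness of $\Omega$ --- the only hypothesis you impose --- does not deliver it, and the lemma is stated for arbitrary open sets. The standard proof (Murat, Tartar, and the argument behind \cite{MR2169977}) sidesteps interpolation-space theory entirely: multiply by cutoffs so that all distributions are supported in a fixed ball $B$, solve $\Delta u_{n}=\phi_{n}$ in $H^{1}_{0}(B)$, use Calder\'on--Zygmund estimates (valid on a ball for every exponent in $(1,\infty)$) to turn $W^{-1,r}$ control of $\phi_{n}$ into $L^{r}$ control of $\nabla u_{n}$, and then interpolate by nothing more than H\"older's inequality, $\|\nabla u\|_{L^{2}}\leq\|\nabla u\|^{\theta}_{L^{s}}\|\nabla u\|^{1-\theta}_{L^{p}}$. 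This localization is precisely why the honest conclusion is compactness in $H^{-1}_{loc}(\Omega)$. For the way the lemma is used in this paper (on the bounded Lipschitz cylinder $\Omega_{T}=\Omega\times(0,T)$) either route can be completed, but as a proof of the lemma in the generality you and the paper state it, your interpolation step should be replaced by the localized potential argument, and the conclusion weakened to $H^{-1}_{loc}$.
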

\textbf{Proof of Theorem \ref{chap9thm2}:} Let $\eta:\mathbb{R}\to \mathbb{R}$ be a convex, $C^{2}(\mathbb{R})$ entropy. Then for $j=1,2,\cdots,d$, there exists $C^{2}(\mathbb{R})$ functions $q_{j}:\mathbb{R}\to\mathbb{R}$ such that
 \begin{equation}\label{compactnessequation1}
  \eta^{\prime}f_{j}^{\prime}= q_{j}^{\prime}.
 \end{equation}
 Multiplying both sides the equation \eqref{ibvp.parab.a} by $\eta^{\prime}(u^{\varepsilon})$ and using chain rule, we get
 \begin{equation}\label{compactnessequation2}
 \frac{\partial \eta(u^{\varepsilon})}{\partial t} + \displaystyle\sum_{j=1}^{d}\frac{\partial q_{j}(u^{\varepsilon})}{\partial x_{j}} = \varepsilon\hspace{0.1cm}\displaystyle\sum_{j=1}^{d}\frac{\partial }{\partial x_{j}}\left(B(u^{\varepsilon})\frac{\partial u^{\varepsilon}}{\partial x_{j}}\right)\hspace{0.1cm}\eta^{\prime}(u^{\varepsilon}).
\end{equation}
From equation \eqref{compactnessequation2}, we get
\begin{equation}\label{chap9eqn4}
 \frac{\partial \eta(u^{\varepsilon})}{\partial t} + \displaystyle\sum_{j=1}^{d}\frac{\partial q_{j}(u^{\varepsilon})}{\partial x_{j}} = \varepsilon\hspace{0.1cm}\displaystyle\sum_{j=1}^{d}\frac{\partial }{\partial x_{j}}\left(B(u^{\varepsilon})\frac{\partial \eta(u^{\varepsilon})}{\partial x_{j}}\right) - \varepsilon \displaystyle\sum_{j=1}^{d}B(u^{\varepsilon})\hspace{0.1cm}\left(\frac{\partial u}{\partial x_{j}}\right)^{2}\hspace{0.1cm}\eta^{\prime\prime}(u^{\varepsilon})
\end{equation}
By appealing to Lemma \ref{chap9lem2}, \eqref{chap9eqn2} will be proved if we prove 
\begin{equation}\label{compactnessequation3}
 \varepsilon\hspace{0.1cm}\displaystyle\sum_{j=1}^{d}\frac{\partial }{\partial x_{j}}\left(B(u^{\varepsilon})\frac{\partial \eta(u^{\varepsilon})}{\partial x_{j}}\right)\to 0\hspace{0.1cm}\mbox{in}\hspace{0.1cm}H^{-1}(\Omega_{T})
\end{equation}
and 
\begin{equation}\label{compactnessequation4}
 - \varepsilon \displaystyle\sum_{j=1}^{d}B(u^{\varepsilon})\hspace{0.1cm}\left(\frac{\partial u}{\partial x_{j}}\right)^{2}\hspace{0.1cm}\eta^{\prime\prime}(u^{\varepsilon})\hspace{0.2cm}\mbox{is bounded in the space of measures}\hspace{0.1cm}M(\Omega_{T}).
\end{equation}
Denote
$$I := [-\|u_{0}\|_{L^{\infty}(\mathbb{R})},\|u_{0}\|_{L^{\infty}(\mathbb{R})}].$$
Firstly, we prove \eqref{compactnessequation3}. Note that
\begin{equation}\label{chap9eqn5}
 \Big\|\varepsilon\hspace{0.1cm}\displaystyle\sum_{j=1}^{d}\frac{\partial }{\partial x_{j}}\left(B(u^{\varepsilon})\frac{\partial \eta(u^{\varepsilon})}{\partial x_{j}}\right)\Big\|_{H^{-1}(\Omega_{T})} = \displaystyle\sup_{\substack{\phi\in H^{1}_{0}(\Omega_{T}),\\ \|\phi\|_{H^{1}_{0}(\Omega_{T})}\leq 1}}\left|\int_{0}^{T}\int_{\Omega}\varepsilon\hspace{0.1cm}\displaystyle\sum_{j=1}^{d}\frac{\partial }{\partial x_{j}}\left(B(u^{\varepsilon})\frac{\partial \eta(u^{\varepsilon})}{\partial x_{j}}\right)\hspace{0.1cm}\phi\hspace{0.1cm}dx\hspace{0.1cm}dt\right|. 
\end{equation}
Using integration by parts formula and  $\|\phi\|_{H^{1}_{0}(\Omega_{T})}\leq 1$ in \eqref{chap9eqn5}, we arrive at 
\begin{equation}\label{chap9eqn6}
 \left|-\int_{0}^{T}\int_{\Omega}\varepsilon\hspace{0.1cm}\displaystyle\sum_{j=1}^{d}B(u^{\varepsilon})\eta^{\prime}(u^{\varepsilon})\frac{\partial u^{\varepsilon}}{\partial x_{j}}\hspace{0.1cm}\frac{\partial \phi}{\partial x_{j}}\hspace{0.1cm}dx\hspace{0.1cm}dt\right| \leq \varepsilon \|B\|_{L^{\infty}(\mathbb{R})}\|\eta^{'}\|_{L^{\infty}(I)}\|\nabla u^{\varepsilon}\|_{\left(L^{2}(\Omega_{T})\right)^{d}}.
\end{equation}
From  Theorem \ref{Compactness.lemma.1}, we have
\begin{equation}\label{Comp.chap9eqn26}
  \displaystyle\sum_{j=1}^{d} \hspace{0.1cm}\left(\sqrt{\varepsilon}\Big\| \frac{\partial u^{\varepsilon}}{\partial x_{j}}\Big\|_{L^{2}(\Omega_{T})}\right)^{2} \leq\frac{1}{2r}\|u_{0\varepsilon}\|^{2}_{L^{2}(\Omega)}\leq
  \frac{1}{2r}\|u_{0}\|^{2}_{L^{\infty}(\Omega)}\mbox{Vol}(\Omega).
\end{equation}
Using \eqref{Comp.chap9eqn26} in \eqref{chap9eqn6} and letting $\varepsilon\to 0$ in \eqref{chap9eqn6}, we have \eqref{compactnessequation3}.\\
Secondly, We want to prove \eqref{compactnessequation4}.  We have $- \varepsilon \displaystyle\sum_{j=1}^{d}B(u^{\varepsilon})\hspace{0.1cm}\left(\frac{\partial u}{\partial x_{j}}\right)^{2}\hspace{0.1cm}\eta^{''}(u^{\varepsilon})\in L^{1}(\Omega_{T})$. We know that $L^{1}(\Omega_{T})$ is continuously imbedded in $\left(L^{\infty}(\Omega_{T})\right)^{\ast}$. Therefore, we have 
\begin{eqnarray}\label{chap9eqn11}
 \Big\| -\varepsilon \displaystyle\sum_{j=1}^{d}B(u^{\varepsilon})\hspace{0.1cm}\left(\frac{\partial u}{\partial x_{j}}\right)^{2}\hspace{0.1cm}\eta^{''}(u^{\varepsilon})\Big\|_{M(\Omega_{T})} &\leq& \Big\|-\varepsilon \displaystyle\sum_{j=1}^{d}B(u^{\varepsilon})\hspace{0.1cm}\left(\frac{\partial u}{\partial x_{j}}\right)^{2}\hspace{0.1cm}\eta^{''}(u^{\varepsilon})\Big\|_{L^{1}(\Omega_{T})},\nonumber\\
 &\leq& \varepsilon \|B\|_{L^{\infty}(\mathbb{R})}\hspace{0.1cm}\|\eta^{''}\|_{L^{\infty}(I)}\|\nabla u\|_{\left(L^{2}(\Omega_{T})\right)^{d}}.
\end{eqnarray}
Using inequality \eqref{Comp.chap9eqn26} in \eqref{chap9eqn11}, we get
\begin{equation}\label{compactnessequation9}
 \Big\| -\varepsilon \displaystyle\sum_{j=1}^{d}B(u^{\varepsilon})\hspace{0.1cm}\left(\frac{\partial u}{\partial x_{j}}\right)^{2}\hspace{0.1cm}\eta^{''}(u^{\varepsilon})\Big\|_{M(\Omega_{T})} \leq C^{'}\|B\|_{L^{\infty}(\mathbb{R})}\|\eta^{''}\|_{L^{\infty}(I)},
\end{equation}
where $C^{'}$ is independent of $\varepsilon$. Therefore we have obtained \eqref{compactnessequation4}.
Using \eqref{compactnessequation3}, \eqref{compactnessequation4} and in view of Lemma \ref{chap9lem2}, we have \eqref{chap9eqn2}.
\\

For space dimension $d=1$, the extraction of an {\it a.e.} convergent subsequence is obtained by proving the following result.

\begin{theorem}\label{chap9thm3}
{\rm Assume \textbf{Hypothesis D} and let $\left\{u^{\varepsilon}\right\}$ be sequence of solutions to generalized viscosity problem \eqref{ibvp.parab} such that 
 \begin{equation}\label{chap9eqn13}
  \frac{\partial \eta(u^{\varepsilon})}{\partial t} + \frac{\partial q(u^{\varepsilon})}{\partial x}\hspace{0.2cm}\subset\hspace{0.1cm}\mbox{compact set in }\hspace{0.2cm} H^{-1}(\Omega_{T}).
 \end{equation}
for every $C^{2}(\mathbb{R})$ entropy-entropy flux pair $(\eta, q)$ of scalar conservation laws \eqref{ibvp.parab.a} in one space dimension. Then there is a subsequence of $u^{\varepsilon}$ such that the following 
convergence in $L^{\infty}(\Omega_{T})-\mbox{weak}^{\ast}$
\begin{equation*}
 u^{\varepsilon}\rightharpoonup u,\hspace{0.3cm}f(u^{\varepsilon})\rightharpoonup f(u),\hspace{0.2cm}\mbox{as}\hspace{0.1cm}\varepsilon\to 0.
\end{equation*}
holds. Further, if the set of $u$ with with $f^{''}(u)\neq 0$ is dense in $\mathbb{R}$, then $\left\{u^{\varepsilon}\right\}$ converges almost everywhere to $u$ in $\Omega_{T}$.}
\end{theorem}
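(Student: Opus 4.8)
The plan is to run Tartar's compensated compactness argument, whose only inputs are the uniform $L^\infty$ bound and the compact entropy production \eqref{chap9eqn13}. \emph{Young measure.} First I would invoke the maximum principle (Theorem \ref{regularized.chap3thm1}), which confines every $u^\varepsilon$ to the compact interval $I$, so that $\{u^\varepsilon\}$ is bounded in $L^\infty(\Omega_T)$. By the fundamental theorem on Young measures, a subsequence (not relabelled) generates a family of probability measures $\{\nu_{(x,t)}\}_{(x,t)\in\Omega_T}$, each supported in $I$, with $g(u^\varepsilon)\rightharpoonup \langle \nu_{(x,t)},g\rangle$ in $L^\infty(\Omega_T)$-weak$^\ast$ for every $g\in C(\R)$. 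Setting $u(x,t):=\langle \nu_{(x,t)},\mathrm{id}\rangle$ gives $u^\varepsilon\rightharpoonup u$ at once, and taking $g=f$ componentwise shows that $f(u^\varepsilon)$ converges weak$^\ast$ to the barycenter $\langle\nu_{(x,t)},f\rangle$; the identification of this limit with $f(u)$ is obtained in the nondegenerate case below.

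\emph{Div--curl and the Tartar relation.} Next I would apply the div--curl lemma on $\Omega_T\subset\R^2$. For two $C^2$ entropy pairs $(\eta_1,q_1)$ and $(\eta_2,q_2)$, set $U^\varepsilon:=(\eta_1(u^\varepsilon),q_1(u^\varepsilon))$ and $V^\varepsilon:=(-q_2(u^\varepsilon),\eta_2(u^\varepsilon))$ as fields in the variables $(t,x)$. Both are bounded in $L^\infty(\Omega_T)$, while hypothesis \eqref{chap9eqn13} says that $\mathrm{div}\,U^\varepsilon=\partial_t\eta_1(u^\varepsilon)+\partial_x q_1(u^\varepsilon)$ and $\mathrm{curl}\,V^\varepsilon=\partial_t\eta_2(u^\varepsilon)+\partial_x q_2(u^\varepsilon)$ lie in a compact subset of $H^{-1}_{\mathrm{loc}}(\Omega_T)$. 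The div--curl lemma then gives $U^\varepsilon\cdot V^\varepsilon\rightharpoonup \overline{U}\cdot\overline{V}$ in the sense of distributions, which, rewritten through the Young measure, is the commutation (Tartar) relation
\begin{equation*}
\langle \nu,\,q_1\eta_2-\eta_1 q_2\rangle=\langle\nu,q_1\rangle\langle\nu,\eta_2\rangle-\langle\nu,\eta_1\rangle\langle\nu,q_2\rangle,
\end{equation*}
valid for a.e. $(x,t)$ and every pair of $C^2$ entropy pairs.

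\emph{Reduction to a Dirac mass.} This is where the hypothesis that $\{u:f''(u)\neq0\}$ is dense enters, and where the genuine work lies. Specializing the commutation relation by fixing the first pair to be $(\mathrm{id},f)$ and letting the second pair range over a sufficiently rich family of entropies, I would show that $\nu_{(x,t)}$ cannot charge two distinct points unless $f$ is affine on the interval joining them. Since the density of $\{f''\neq0\}$ forbids $f$ from being affine on any nondegenerate interval, the support of $\nu_{(x,t)}$ collapses to a single point, i.e. $\nu_{(x,t)}=\delta_{u(x,t)}$ for a.e. $(x,t)$. I expect this to be the main obstacle: extracting the rigidity of the support from the bilinear identity and correctly converting the nonlinearity condition into the exclusion of affine pieces. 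By comparison, the $L^\infty$ compactness and the div--curl computation are routine.

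\emph{Conclusion.} Once $\nu_{(x,t)}=\delta_{u(x,t)}$, the weak$^\ast$ limit of any $g(u^\varepsilon)$ from the first step equals $g(u)$; in particular $\langle\nu,f\rangle=f(u)$, so $f(u^\varepsilon)\rightharpoonup f(u)$. Taking $g(\lambda)=\lambda^2$ gives $(u^\varepsilon)^2\rightharpoonup u^2$, and since $\Omega_T$ has finite measure, testing against the constant $1$ yields $\|u^\varepsilon\|_{L^2(\Omega_T)}\to\|u\|_{L^2(\Omega_T)}$; together with $u^\varepsilon\rightharpoonup u$ in the Hilbert space $L^2(\Omega_T)$ this forces $\|u^\varepsilon-u\|_{L^2(\Omega_T)}\to0$. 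Hence $u^\varepsilon\to u$ in measure and, along a further subsequence, almost everywhere in $\Omega_T$, which is the assertion of the theorem.
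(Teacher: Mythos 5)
Your overall strategy is the paper's own: uniform $L^\infty$ bound, Young measures, div--curl lemma, Tartar commutation relation, reduction to a Dirac mass. The setup (Young measure existence, the div--curl fields, the commutation relation) and your final step (from $\nu_{(x,t)}=\delta_{u(x,t)}$ to strong $L^2$ convergence via $g(\lambda)=\lambda^2$, hence a.e. convergence of a subsequence) are correct; that last step is in fact spelled out more carefully than in the paper. But the core of the proof --- the reduction of $\nu_{(x,t)}$ to a point mass --- is precisely the step you leave as a declaration of intent (``letting the second pair range over a sufficiently rich family of entropies, I would show that $\nu_{(x,t)}$ cannot charge two distinct points unless $f$ is affine\dots''). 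No such family is exhibited and no rigidity argument is given, so as written the proposal does not prove the theorem; it reduces the theorem to its hardest part and stops there.

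The paper fills this gap with one explicit second entropy pair rather than a ``rich family''. Take $(\eta_1,q_1)=(\mathrm{id},f)$ and $(\eta_2,q_2)=(f,g)$ with $g(\lambda):=\int_0^\lambda \left(f'(s)\right)^2 ds$. The commutation relation then reads $\langle\nu,\lambda g-f^2\rangle=\langle\nu,\lambda\rangle\langle\nu,g\rangle-\langle\nu,f\rangle^2$. On the other hand, Cauchy--Schwarz applied to $f(\lambda)-f(u)=\int_u^\lambda f'(s)\,ds$ gives
\begin{equation*}
\left(f(\lambda)-f(u)\right)^2\leq(\lambda-u)\left(g(\lambda)-g(u)\right),\qquad u:=\langle\nu,\lambda\rangle,
\end{equation*}
and integrating this inequality against the probability measure $\nu$ and substituting the commutation relation collapses everything to $\left(\langle\nu,f\rangle-f(u)\right)^2\leq 0$, i.e. $\langle\nu,f\rangle=f(u)$. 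Note that this already yields the first assertion of the theorem, $f(u^\varepsilon)\rightharpoonup f(u)$, with no nondegeneracy assumption; in your outline that identification is deferred to ``the nondegenerate case below'', so your plan proves less than the statement claims. Finally, equality in the Cauchy--Schwarz step forces $f'$ to be constant on the interval with endpoints $u$ and $\lambda$ for $\nu$-a.e.\ $\lambda$; when $\{f''\neq 0\}$ is dense no such nontrivial interval exists, so $\mathrm{supp}\,\nu_{(x,t)}$ collapses to $\{u(x,t)\}$ and $\nu_{(x,t)}=\delta_{u(x,t)}$. If you insert this choice of $g$ and the Cauchy--Schwarz argument at the point you flagged as the main obstacle, your proof closes and coincides with the paper's.
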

The following results are used to prove proving Theorem \ref{chap9thm3}. 
\begin{theorem}{\rm \cite[p.147]{MR584398}}\label{chap9thm1}
\begin{enumerate}
\item {\rm Let $K\subset \mathbb{R}^{p}$ be bounded and $\Omega\subset \mathbb{R}^{d}$ be an open set. Let $u_{n}:\Omega\to\mathbb{R}^{p}$ be such that $u_{n}\in K$ {\it a.e.}. Then there exists a subsequence $\left\{u_{m}\right\}$ and a family of probability measures $\left\{\nu_{x}\right\}_{x\in\Omega}$ $\left(\mbox{depending measurably on x}\right)$ with $\mbox{supp}\hspace{0.1cm}\nu_{x}\subset \overline{K}$ such that if $F$ is continuous function on $\mathbb{R}^{p}$ and 
\begin{equation*}
 \overline{f} = <\nu_{x}, F(\lambda)>\,\, {\it a.e.}
\end{equation*}
then 
\begin{equation*}
 F(u_{m})\rightharpoonup \overline{f}(x)\hspace{0.2cm}\mbox{in}\hspace{0.2cm}L^{\infty}(\Omega)-\mbox{weak}^{\ast}
\end{equation*}
}
\item {\rm Conversely, let $\left\{\nu_{x}\right\}_{x\in\Omega}$ be a family of  probability measures with support in $ \overline{K}$. Then there exists a sequence  $\left\{u_{n}\right\}$, where $u_{n}:\Omega\to\mathbb{R}^{p}$ and $u_{n}\in K$ {\it a.e.}, such that for all continuous functions on $\mathbb{R}^{p}$, we have
\begin{equation*}
 F(u_{n})\rightharpoonup \overline{f}(x)=<\nu_{x}, F(\lambda)>\hspace{0.2cm}\mbox{in}\hspace{0.2cm}L^{\infty}(\Omega)-\mbox{weak}^{\ast}
\end{equation*}
}
\end{enumerate}
\end{theorem}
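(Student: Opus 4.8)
The two assertions are of different character: the first is a soft weak-compactness statement, the second a constructive one, so I would treat them separately. For part~(1), the plan is purely functional-analytic. Since $u_n\in K$ a.e. and $K$ is bounded, for every $F\in C(\overline{K})$ the sequence $\{F(u_n)\}$ is bounded in $L^\infty(\Omega)$ by $\sup_{\overline K}|F|$. The space $C(\overline K)$ is separable (as $\overline K$ is compact metric), so I fix a countable dense subset $\{F_k\}$. Because $L^1(\Omega)$ is separable, the closed balls of $L^\infty(\Omega)=\left(L^1(\Omega)\right)^\ast$ are weak-$\ast$ sequentially compact, and a diagonal argument then yields a single subsequence $\{u_m\}$ along which $F_k(u_m)\rightharpoonup L(F_k)$ in $L^\infty(\Omega)$-weak$^\ast$ for every $k$. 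The bound $\|F(u_m)-G(u_m)\|_{L^\infty}\le\|F-G\|_{C(\overline K)}$, uniform in $m$, lets me pass from the dense family to all of $C(\overline K)$, giving a bounded linear operator $L:C(\overline K)\to L^\infty(\Omega)$ with $F(u_m)\rightharpoonup L(F)$ for every $F$.

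To manufacture the measures $\nu_x$ I would fix representatives of the $L(F_k)$ and discard a single null set $N$ outside of which positivity, $\Q$-linearity on the dense family, and the normalisation $L(1)\equiv 1$ all hold at once; positivity $L(F)\ge 0$ for $F\ge 0$ follows since $F(u_m)\ge 0$ forces the weak-$\ast$ limit to be nonnegative when tested against nonnegative $L^1$ functions. For each $x\notin N$ the map $F\mapsto L(F)(x)$ is then a positive, unit-mass linear functional on $C(\overline K)$, so the Riesz--Markov theorem supplies a unique Radon probability measure $\nu_x$ with support in $\overline K$ and $\langle\nu_x,F\rangle=L(F)(x)$. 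Measurability of $x\mapsto\nu_x$ is precisely measurability of each $x\mapsto\langle\nu_x,F\rangle=L(F)(x)$, which holds since $L(F)\in L^\infty(\Omega)$; taking $\overline f(x)=\langle\nu_x,F\rangle$ gives assertion~(1).

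For the converse I would argue constructively in three stages. First, for a single probability measure $\mu$ with $\mathrm{supp}\,\mu\subset\overline K$, I approximate $\mu$ weak-$\ast$ by finite convex combinations of Diracs $\mu^N=\sum_j\lambda_j^N\delta_{a_j^N}$ with $a_j^N\in\overline K$; choosing a $\Z^d$-periodic map $g^N:\R^d\to\overline K$ equal to $a_j^N$ on a portion of measure $\lambda_j^N$ of the unit cell and setting $g^N_n(x)=g^N(nx)$, the Riemann--Lebesgue lemma gives $F(g^N_n)\rightharpoonup\sum_j\lambda_j^N F(a_j^N)=\langle\mu^N,F\rangle$, and diagonalising in $N$ produces a sequence generating the homogeneous measure $\mu$. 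Second, I approximate the general measurable family $x\mapsto\nu_x$ by families that are piecewise constant on a finite partition $\{E_i\}$ of $\Omega$, which is available because the family is weak-$\ast$ measurable and hence approximable by simple functions when tested against the countable collection of pairs $(\phi_\ell,F_k)$ with $\phi_\ell\in L^1(\Omega)$. Third, gluing the homogeneous constructions over the pieces $E_i$ and taking a final diagonal sequence against these countably many test pairs yields the required $\{u_n\}$.

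The softer part~(1) is essentially automatic once separability of $L^1(\Omega)$ and $C(\overline K)$ is invoked, the only delicate point being the collapse of the $F$-dependent null sets into one, which the countable dense family handles. The real work lies in part~(2): the main obstacle is organising the two approximations — Diracs in $\lambda$ and simple functions in $x$ — together with the periodic-oscillation construction into one diagonal sequence. I would metrise weak-$\ast$ convergence of the generated Young measures through the countable test collection $\{(\phi_\ell,F_k)\}$, thereby reducing the entire converse to a single diagonal extraction in which all approximation errors are driven to zero simultaneously.
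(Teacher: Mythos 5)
The paper does not prove this statement at all: Theorem \ref{chap9thm1} is quoted directly from Tartar \cite[p.147]{MR584398} and is used as a black box, so there is no in-paper proof to compare yours against. What you have written is, in outline, the standard proof of the fundamental theorem of Young measures from the literature: part (1) via separability of $C(\overline{K})$ and of $L^{1}(\Omega)$, a diagonal weak-$\ast$ extraction, collapsing the $F$-dependent null sets into one using the countable dense family, and the Riesz--Markov theorem applied pointwise; part (2) via approximation of each measure by convex combinations of Dirac masses, periodic oscillations plus the Riemann--Lebesgue lemma in the homogeneous case, piecewise-constant approximation in $x$, and a final diagonal extraction metrised by a countable collection of test pairs. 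Both halves are correct as outlined. Two details to tighten if you write this in full. First, in part (1), positivity of the pointwise functional $F\mapsto L(F)(x)$ on all of $C(\overline{K})$, and not merely on the dense family, is obtained most cleanly from the two facts $\|\ell_{x}\|\leq 1$ and $\ell_{x}(1)=1$, which together force positivity; a direct density argument for positivity only works if the dense family is closed under adding rational constants. Second, the statement of part (2) requires $u_{n}\in K$ a.e., not $u_{n}\in\overline{K}$, so you should place the Dirac masses at points of $K$ itself; this is legitimate because $K$ is dense in $\overline{K}$, hence costs nothing in the weak-$\ast$ approximation of $\nu_{x}$ by discrete measures.
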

We give definition of Young measures.
\begin{definition}
 {\rm The family of probability measure $\left\{\nu_{x}\right\}_{x\in \Omega}$ that we get from Theorem \ref{chap9thm1} is called Young measures associated with the sequence $\left\{u_{n}\right\}_{n=1}^{\infty}$.}
\end{definition}
\begin{lemma}(\textbf{Div-curl lemma}){\rm \cite[p.90]{MR2582099},\cite[p.513]{MR2169977}}\label{chap9lem1}\\
 {\em Let $\Omega\subset\mathbb{R}^{d}$ be an open set and $G_{n}$ and $H_{n}$ be two sequences of vector fields in $L^{2}(\Omega;\mathbb{R}^{d})$ converging weakly to limits $\overline{G}$ and $\overline{H}$ respectively as $n\to \infty$. Assume both $\left\{\mbox{div}\hspace{0.1cm}G_{n}\right\}$ and $\left\{\mbox{curl}\,H_{n}\right\}$ lie in a compact subset of $H^{-1}_{loc}(\Omega)$. Then we have the following convergence in the sense of distributions as $n\to \infty$ 
 \begin{equation*}
  G_{n}.H_{n}\to \overline{G}.\overline{H}.
 \end{equation*}
}
\end{lemma}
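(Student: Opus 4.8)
The plan is to reduce the assertion to a local statement in frequency space and then settle it by Plancherel's theorem together with a splitting of the frequency integral into low and high frequencies. Since convergence in $\mathcal{D}'$ is tested against functions $\varphi\in C^\infty_c(\Omega)$, I fix such a $\varphi$ with $\operatorname{supp}\varphi=:K$ and aim to prove $\int_\Omega (G_n\cdot H_n)\,\varphi\,dx\to\int_\Omega(\overline G\cdot\overline H)\,\varphi\,dx$.

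First I would subtract the weak limits. With $\tilde G_n:=G_n-\overline G$ and $\tilde H_n:=H_n-\overline H$, the expansion $G_n\cdot H_n=\tilde G_n\cdot\tilde H_n+\tilde G_n\cdot\overline H+\overline G\cdot\tilde H_n+\overline G\cdot\overline H$ shows (since $\overline G\varphi,\overline H\varphi\in L^2$ and $\tilde G_n,\tilde H_n\rightharpoonup0$) that the two cross terms integrate to $0$ against $\varphi$ in the limit, so only $\tilde G_n\cdot\tilde H_n$ matters. Moreover $\operatorname{div}G_n\rightharpoonup\operatorname{div}\overline G$ in $\mathcal{D}'$ while $\{\operatorname{div}G_n\}$ is precompact in $H^{-1}_{\mathrm{loc}}(\Omega)$; uniqueness of the distributional limit forces $\operatorname{div}G_n\to\operatorname{div}\overline G$ strongly in $H^{-1}_{\mathrm{loc}}$, hence $\operatorname{div}\tilde G_n\to0$, and symmetrically $\operatorname{curl}\tilde H_n\to0$, strongly in $H^{-1}_{\mathrm{loc}}$. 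Thus I may assume $\overline G=\overline H=0$ with $\operatorname{div}G_n\to0$ and $\operatorname{curl}H_n\to0$ in $H^{-1}_{\mathrm{loc}}$. Next I localize: choosing $\theta\in C^\infty_c(\Omega)$ with $\theta\equiv1$ near $K$ and writing $g_n:=\varphi\,G_n$, $h_n:=\theta\,H_n$ (both extended by zero to $\mathbb{R}^d$, supported in a fixed ball $B$), one has $\int_\Omega G_n\cdot H_n\,\varphi\,dx=\int_{\mathbb{R}^d}g_n\cdot h_n\,dx$ because $\theta=1$ on $\operatorname{supp}\varphi$. The Leibniz rule gives $\operatorname{div}g_n=\varphi\operatorname{div}G_n+\nabla\varphi\cdot G_n$: the first term tends to $0$ in $H^{-1}$, while $\nabla\varphi\cdot G_n$ is bounded in $L^2$ and converges weakly to $0$, so by the compactness of the embedding $L^2\hookrightarrow H^{-1}$ (Rellich–Kondrachov, dualized) it tends to $0$ strongly in $H^{-1}$. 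The same reasoning gives $\operatorname{curl}h_n\to0$ in $H^{-1}$. I have thus reduced everything to showing $\int_{\mathbb{R}^d}g_n\cdot h_n\,dx\to0$ for $g_n,h_n\in L^2(\mathbb{R}^d;\mathbb{R}^d)$ supported in $B$, with $g_n,h_n\rightharpoonup0$ in $L^2$, $\operatorname{div}g_n\to0$ and $\operatorname{curl}h_n\to0$ in $H^{-1}(\mathbb{R}^d)$.

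By Plancherel's theorem, $\int_{\mathbb{R}^d}g_n\cdot h_n\,dx=\operatorname{Re}\int_{\mathbb{R}^d}\widehat g_n(\xi)\cdot\overline{\widehat h_n(\xi)}\,d\xi$. Splitting each Fourier amplitude into its components parallel and perpendicular to $\xi$ and using orthogonality,
\[
\widehat g_n(\xi)\cdot\overline{\widehat h_n(\xi)}=\widehat g_n^{\,\parallel}\cdot\overline{\widehat h_n^{\,\parallel}}+\widehat g_n^{\,\perp}\cdot\overline{\widehat h_n^{\,\perp}},
\]
where $|\widehat g_n^{\,\parallel}|=|\widehat{\operatorname{div}g_n}|/|\xi|$ and, from the identity $|\widehat{\operatorname{curl}h_n}|^2=|\xi|^2|\widehat h_n|^2-|\xi\cdot\widehat h_n|^2=|\xi|^2|\widehat h_n^{\,\perp}|^2$, one has $|\widehat h_n^{\,\perp}|=|\widehat{\operatorname{curl}h_n}|/|\xi|$. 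Consequently the integrand is bounded pointwise by $\dfrac{|\widehat{\operatorname{div}g_n}|}{|\xi|}\,|\widehat h_n|+|\widehat g_n|\,\dfrac{|\widehat{\operatorname{curl}h_n}|}{|\xi|}$.

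The decisive step is to split the frequency integral at a radius $R\ge1$. On $\{|\xi|>R\}$ I use $1/|\xi|\le \sqrt{2}/\sqrt{1+|\xi|^2}$ and Cauchy–Schwarz to bound the two terms by $\sqrt{2}\,\|\operatorname{div}g_n\|_{H^{-1}}\|h_n\|_{L^2}$ and $\sqrt{2}\,\|g_n\|_{L^2}\|\operatorname{curl}h_n\|_{H^{-1}}$, both of which vanish as $n\to\infty$ because of the strong $H^{-1}$ convergence and the uniform $L^2$ bounds. On the bounded region $\{|\xi|\le R\}$ I use $\frac{|\widehat{\operatorname{div}g_n}|}{|\xi|}=\frac{|\xi\cdot\widehat g_n|}{|\xi|}\le|\widehat g_n|$ and likewise $\frac{|\widehat{\operatorname{curl}h_n}|}{|\xi|}\le|\widehat h_n|$, so the integrand is dominated by $2|\widehat g_n|\,|\widehat h_n|$; this is uniformly bounded because compact support gives $|\widehat g_n(\xi)|\le|B|^{1/2}\|g_n\|_{L^2}$, and it tends to $0$ for each fixed $\xi$ because $x\mapsto e^{-ix\cdot\xi}$ lies in $L^2(B)$ and $g_n\rightharpoonup0$ in $L^2(B)$, whence $\widehat g_n(\xi)=\int_B g_n(x)\,e^{-ix\cdot\xi}\,dx\to0$. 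Dominated convergence on $\{|\xi|\le R\}$ then yields convergence to $0$. The genuine obstacle here is the singular factor $1/|\xi|$: near the origin it is neutralized precisely because the symbols of $\operatorname{div}$ and $\operatorname{curl}$ vanish to first order in $\xi$, keeping the quotients bounded, whereas at high frequency that same factor is harmless and the $H^{-1}$-smallness of $\operatorname{div}g_n$ and $\operatorname{curl}h_n$ does the work. Balancing the $H^{-1}$ control at high frequency against the pointwise Fourier decay (furnished by compact support together with weak $L^2$ convergence) at low frequency is the crux of the proof.
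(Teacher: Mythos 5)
This lemma enters the paper purely by citation (Tartar \cite{MR2582099}, Dafermos \cite{MR2169977}); the paper contains no proof of it, so there is no in-paper argument to compare against. Your proof is correct, and it is essentially the classical argument from those cited sources: reduce to zero weak limits and strongly $H^{-1}$-null div/curl (precompactness plus distributional identification of the limit), localize with cutoffs so the Leibniz remainders are killed by the compactness of the embedding of compactly supported, weakly null $L^{2}$ sequences into $H^{-1}$, then finish with Plancherel, splitting each Fourier amplitude into components parallel and perpendicular to $\xi$ and splitting the frequency integral into a high-frequency part (Cauchy--Schwarz against the vanishing $H^{-1}$ norms) and a low-frequency part (dominated convergence plus pointwise vanishing of $\widehat{g}_{n}(\xi)$ coming from weak convergence and fixed compact support). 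The only blemish is a normalization slip: with the curl normed over all ordered index pairs one has $|\widehat{\operatorname{curl}h_{n}}|^{2}=2\left(|\xi|^{2}|\widehat{h}_{n}|^{2}-|\xi\cdot\widehat{h}_{n}|^{2}\right)$, so your identity is off by a factor of $2$, which only changes harmless constants in the estimates.
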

\vspace{0.2cm}
For clarity, we repeat the following proof from \cite[p.518]{MR2169977}.\\
\textbf{Proof of Theorem \ref{chap9thm3}:}
  Since $\|u^{\varepsilon}\|_{L^{\infty}(\Omega_{T})}\leq \|u_{0}\|_{L^{\infty}(\Omega_{T})}$, by Banach-Alaoglu theorem there exists a subsequence, still denoted by $\left\{u^{\varepsilon}\right\}$ such that
  \begin{center}
   $u^{\varepsilon}\rightharpoonup u,\hspace{0.3cm}\mbox{as}\hspace{0.1cm}\varepsilon\to 0$
  \end{center}
 in $L^{\infty}(\Omega_{T})-\mbox{weak}^{\ast}$. Applying Theorem \ref{chap9thm1}, we get the existence of a family of Young measures $\left\{\nu_{x,t}\right\}_{\left(x,t\right)\in \Omega_{T}}$ such that 
 \begin{equation*}
  f(u_{s})\rightharpoonup \overline{f}(x,t)\hspace{0.2cm}\mbox{in}\hspace{0.2cm}L^{\infty}(\Omega_{T})-\mbox{weak}^{\ast},
 \end{equation*}
where 
\begin{equation}\label{chap9eqn14}
 \overline{f}(x,t) = \int_{\mathbb{R}}f(\lambda)\hspace{0.1cm}d\nu_{x,t}(\lambda) = \langle \nu_{x,t},f \rangle.
\end{equation}
 We want to show $\langle \nu_{x,t},f \rangle = f(u)$, that is, $\nu_{x,t}$ reduces to dirac mass when there is no interval on which $f^{''}$ is constant.  Define
 $$g(\lambda):=\int_{0}^{\lambda}\left(f^{'}(s)\right)^{2}\hspace{0.1cm}ds. $$
 Let  $\left(\lambda,f(\lambda)\right)$ and $\left(f(\lambda),g(\lambda)\right)$ be two entropy-entropy flux pair. Using Div-curl lemma with sequences $(u^{\varepsilon},f(u^{\varepsilon}))$ and $(f(u^{\varepsilon}),g(u^{\varepsilon}))$, we obtain
\begin{equation}\label{chap9eqn15}
 (u^{\varepsilon},f(u^{\varepsilon})).(f(u^{\varepsilon}),g(u^{\varepsilon}))\rightharpoonup \langle\nu_{x,t},u\rangle \langle\nu_{x,t},g\rangle - \langle\nu_{x,t},f\rangle \langle\nu_{x,t},f\rangle
\end{equation}
in $L^{\infty}(\Omega_{T})-\mbox{weak}^{\ast}$. 
Observe that
\begin{equation}\label{chap9eqn17}
 \langle\nu_{x,t},\lambda\rangle \langle\nu_{x,t},g\rangle - \langle\nu_{x,t},f\rangle \langle\nu_{x,t},f\rangle = \langle \nu_{x,t}, \lambda g-f^{2}\rangle
\end{equation}
Using Schwartz inequality, we get
\begin{equation}\label{chap9eqn16}
 \left(f(\lambda)-f(u)\right)^{2}\leq \left(\lambda - u\right)\left(g(\lambda)-g(u)\right),
\end{equation}
Since probability measures are positive, from inequality \eqref{chap9eqn16}, we have
\begin{equation}\label{chap9eqn18}
 \langle\nu_{x,t}, \left(f(\lambda)-f(u)\right)^{2}- \left(\lambda - u\right)\left(g(\lambda)-g(u)\right)\rangle \leq 0.
\end{equation}
Using \eqref{chap9eqn17} in \eqref{chap9eqn18}, we get
\begin{equation}\label{chap9eqn19}
 \left(\langle\nu_{x,t},f\rangle - f(u)\right)^{2}\leq 0
\end{equation}
Equation \eqref{chap9eqn19} shows that $\langle\nu_{x,t},f\rangle = f(u)$. Therefore we have proved that the $\nu_{x,t}$ reduces to the Dirac measure $\delta_{u(x,t)}$. 
The Schwartz's inequality is an equality on the interval where $f^{'}$ is constant with end points $\lambda,u$. 
When no such interval exists, the support of $\nu_{x,t}$ collapses to a single point and $\nu_{x,t}$ reduces to Dirac mass $\delta_{u(x,t)}$. 
This completes the proof.\\
We now show that the $\left\{\frac{\partial u^{\varepsilon}}{\partial t}\right\}$ lies in a compact set of $H^{-1}_{loc}(\Omega_{T})$.
The following result will be used in proving compactness.
\begin{lemma}\cite[p.1020]{MR542510}\label{compactness12.lem1}
 Let $v\in C^{1}(\overline{\Omega})$. Then 
 $$\displaystyle\lim_{n\to\infty}\int_{\left\{x\in\Omega\,;\,|v(x)|<\frac{1}{n}\right\}}\,\left|\nabla v\right|\,dx=0.$$
\end{lemma}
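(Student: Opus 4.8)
The plan is to reduce the statement to showing that the gradient of $v$ contributes nothing when integrated over the zero set of $v$, and then to establish that fact via the implicit function theorem.

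First I would introduce the nested family $E_n := \{x \in \Omega : |v(x)| < \tfrac{1}{n}\}$. Since $\tfrac{1}{n+1} < \tfrac{1}{n}$, these sets decrease, $E_{n+1} \subset E_n$, and their intersection is exactly the zero set $Z := \{x \in \Omega : v(x) = 0\}$: a point lies in every $E_n$ precisely when $|v(x)| < \tfrac{1}{n}$ for all $n$, that is, when $v(x) = 0$. Consequently the characteristic functions decrease pointwise, $\chi_{E_n} \downarrow \chi_{Z}$ everywhere on $\Omega$.

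Next I would pass to the limit. Because $v \in C^{1}(\overline{\Omega})$ and $\overline{\Omega}$ is compact, $|\nabla v|$ is continuous and bounded, and since $\Omega$ is a bounded domain it has finite Lebesgue measure; hence $|\nabla v| \in L^{1}(\Omega)$ and serves as an integrable dominating function. Writing $\int_{E_n} |\nabla v|\,dx = \int_{\Omega} \chi_{E_n}\,|\nabla v|\,dx$ and invoking the dominated convergence theorem together with the pointwise convergence above, I obtain
$$\lim_{n\to\infty} \int_{E_n} |\nabla v|\,dx = \int_{\Omega} \chi_{Z}\,|\nabla v|\,dx = \int_{Z} |\nabla v|\,dx.$$
Thus the lemma is equivalent to the assertion $\int_{Z} |\nabla v|\,dx = 0$.

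The crux, and the step I expect to be the main obstacle, is proving this last identity, which amounts to showing that $\nabla v = 0$ almost everywhere on $Z$. I would split $Z$ into $Z_0 := \{x \in Z : \nabla v(x) = 0\}$ and $Z_1 := \{x \in Z : \nabla v(x) \neq 0\}$. The contribution of $Z_0$ is zero because the integrand vanishes identically there. For $Z_1$ I would use the implicit function theorem: at any $x_0 \in Z_1$, continuity of $\nabla v$ yields a neighborhood on which some partial derivative stays nonzero, so near $x_0$ the level set $\{v = 0\}$ coincides with a $C^{1}$ graph over a coordinate hyperplane, hence a $(d-1)$-dimensional hypersurface of zero $d$-dimensional Lebesgue measure. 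By second countability of $\mathbb{R}^{d}$, countably many such neighborhoods cover $Z_1$, so $Z_1$ is Lebesgue-null; since $|\nabla v|$ is bounded, $\int_{Z_1} |\nabla v|\,dx = 0$. Combining the two pieces gives $\int_{Z} |\nabla v|\,dx = 0$, which completes the argument. An alternative and equally viable route would bypass the zero set via the coarea formula, writing $\int_{E_n} |\nabla v|\,dx = \int_{-1/n}^{1/n} \mathcal{H}^{d-1}\big(\{v = t\}\big)\,dt$ and using absolute continuity of the integral of the coarea density $t \mapsto \mathcal{H}^{d-1}(\{v = t\}) \in L^{1}(\mathbb{R})$ to force the right-hand side to $0$ as $n \to \infty$.
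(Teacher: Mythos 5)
Your proof is correct, but note that there is nothing in the paper to compare it against: the lemma is stated with a citation to Bardos--le Roux--N\'ed\'elec \cite{MR542510} and the paper gives no proof of it, so your write-up supplies a justification the paper omits. Your argument is the natural self-contained one, and all three steps hold up: the sets $E_n=\left\{x\in\Omega : |v(x)|<\tfrac{1}{n}\right\}$ do decrease to the zero set $Z=\{v=0\}$; dominated convergence applies because $|\nabla v|$ is continuous on the compact set $\overline{\Omega}$ and $\Omega$ is bounded, so $|\nabla v|\in L^{1}(\Omega)$; and the crux $\int_{Z}|\nabla v|\,dx=0$ is handled correctly by splitting $Z$ into $Z_{0}=\{x\in Z:\nabla v(x)=0\}$, where the integrand vanishes, and $Z_{1}=Z\setminus Z_{0}$, which is Lebesgue-null since each of its points has a neighborhood in which $Z$ is contained in a $C^{1}$ graph (implicit function theorem) and a countable subcover suffices by Lindel\"of's theorem. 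Two cosmetic remarks: completeness of Lebesgue measure lets you integrate over $Z_{1}$ without worrying about its measurability (though it is measurable, being the intersection of the relatively closed set $Z$ with the open set $\{\nabla v\neq 0\}$), and in the implicit function theorem step ``is contained in a graph'' is all you need and all you literally get before shrinking the neighborhood. The coarea-formula alternative you sketch is also valid and shorter, at the price of invoking a much deeper theorem; the density-point argument (at a Lebesgue density point of $Z$ the differential of $v$ must vanish) is a third standard route that avoids both the implicit function theorem and the coarea formula and generalizes to Sobolev functions.
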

The following result follows from \cite[p.67]{MR1304494} which is useful in proving, and we omit its proof. 
\begin{lemma}\label{regularized.lem1}
  Let $u_{0}\in W^{1,2}(\Omega)\cap L^{\infty}_{c}(\Omega)$. Then $u_{0\varepsilon}$ satisfies the following bounds
  \begin{eqnarray}
    \|u_{0\varepsilon}\|_{L^{\infty}(\Omega)}\leq \|u_{0}\|_{L^{\infty}(\Omega)}\label{regularized.max.eqn1}\\[2mm]
     \|\nabla u_{0\varepsilon}\|_{\left(L^{1}(\Omega)\right)^{d}}\leq TV_{\Omega}(u_{0}) \label{regularized.max.eqn1a}
  \end{eqnarray}
There exists a constant $C> 0$ such that for all $\varepsilon > 0$, $u_{0\varepsilon}$ satisfies 
  \begin{eqnarray}\label{regularized.max.eqn1b}
   \|\Delta u_{0\varepsilon}\|_{L^{1}(\Omega)}\leq \frac{C}{\varepsilon}TV_{\Omega}(u_{0}).
  \end{eqnarray}
\end{lemma}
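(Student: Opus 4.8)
The plan is to reduce all three estimates to elementary convolution inequalities on $\R^d$. Write $u_{0\varepsilon}=u_{0}\ast\rho_{\varepsilon}$ and extend $u_{0}$ by zero outside $\Omega$; this is legitimate because $u_{0}\in L^{\infty}_{c}(\Omega)$ has compact support strictly inside $\Omega$, so the zero extension lies in $W^{1,1}(\R^{d})$ with no boundary contribution, and for small $\varepsilon$ the function $u_{0\varepsilon}$ is supported in $\Omega$. Since $u_{0}\in W^{1,2}(\Omega)$ on a bounded domain we have $u_{0}\in W^{1,1}(\Omega)$, hence $TV_{\Omega}(u_{0})=\int_{\Omega}|\nabla u_{0}|\,dx$, which identifies the right-hand sides of \eqref{regularized.max.eqn1a} and \eqref{regularized.max.eqn1b} with $L^{1}$ norms of $\nabla u_{0}$.

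For \eqref{regularized.max.eqn1} I would use $\rho_{\varepsilon}\ge 0$ with $\int_{\R^{d}}\rho_{\varepsilon}=1$, so Young's inequality gives $\|u_{0\varepsilon}\|_{L^{\infty}}\le\|u_{0}\|_{L^{\infty}}\|\rho_{\varepsilon}\|_{L^{1}}=\|u_{0}\|_{L^{\infty}}$. For \eqref{regularized.max.eqn1a} I would use that differentiation commutes with convolution, $\nabla u_{0\varepsilon}=(\nabla u_{0})\ast\rho_{\varepsilon}$, together with the pointwise bound $|(\nabla u_{0}\ast\rho_{\varepsilon})(x)|\le(|\nabla u_{0}|\ast\rho_{\varepsilon})(x)$ coming from the triangle inequality inside the convolution integral. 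Integrating over $\R^{d}$ and using $\|\rho_{\varepsilon}\|_{L^{1}}=1$ (Tonelli) yields $\|\nabla u_{0\varepsilon}\|_{(L^{1})^{d}}\le\|\nabla u_{0}\|_{L^{1}}=TV_{\Omega}(u_{0})$, exactly the asserted inequality with constant one.

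The bound \eqref{regularized.max.eqn1b} is the crux and the only place where the factor $1/\varepsilon$ is forced. Since $u_{0}$ carries only one derivative in $L^{1}$, I would split each second derivative so that one derivative lands on $u_{0}$ and the other on the mollifier: $\partial_{j}^{2}u_{0\varepsilon}=\partial_{j}\big((\partial_{j}u_{0})\ast\rho_{\varepsilon}\big)=(\partial_{j}u_{0})\ast(\partial_{j}\rho_{\varepsilon})$. Summing over $j$ and applying Young's inequality gives $\|\Delta u_{0\varepsilon}\|_{L^{1}}\le\sum_{j}\|\partial_{j}u_{0}\|_{L^{1}}\,\|\partial_{j}\rho_{\varepsilon}\|_{L^{1}}$. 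The decisive computation is the scaling of the mollifier derivative: from $\rho_{\varepsilon}(x)=\varepsilon^{-d}\rho(x/\varepsilon)$ one gets $\partial_{j}\rho_{\varepsilon}(x)=\varepsilon^{-d-1}(\partial_{j}\rho)(x/\varepsilon)$, hence $\|\partial_{j}\rho_{\varepsilon}\|_{L^{1}}=\varepsilon^{-1}\|\partial_{j}\rho\|_{L^{1}}$. Combining and passing from $\sum_{j}\|\partial_{j}u_{0}\|_{L^{1}}$ to $\int_{\Omega}|\nabla u_{0}|$ via Cauchy--Schwarz, I obtain $\|\Delta u_{0\varepsilon}\|_{L^{1}}\le(C/\varepsilon)\,TV_{\Omega}(u_{0})$ with $C$ depending only on $\rho$ and $d$.

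The main obstacle is conceptual rather than computational: one must recognize that the Laplacian of the mollification cannot be controlled in $L^{1}$ uniformly in $\varepsilon$, because $u_{0}$ is merely $W^{1,1}$, so placing a full second derivative on $\rho_{\varepsilon}$ is unavoidable, and it is precisely this that produces the sharp $1/\varepsilon$ growth. The only points needing care are the justification that one derivative may be transferred onto $u_{0}$ inside the convolution (valid because $u_{0}\in W^{1,1}(\R^{d})$ after the zero extension) and the verification that $C$ is independent of $\varepsilon$, which the scaling identity for $\|\partial_{j}\rho_{\varepsilon}\|_{L^{1}}$ makes transparent.
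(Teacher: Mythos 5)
Your proof is correct and is essentially the canonical argument: the paper itself omits the proof of this lemma, deferring to the cited reference (Godlewski--Raviart), and the mollification estimates you give are exactly the standard ones underlying that result. All the key points are present and correctly justified --- the zero extension of $u_{0}$ to $\R^{d}$ (legitimate since its essential support is compactly contained in $\Omega$, so the extension lies in $W^{1,1}(\R^{d})$), Young's inequality for \eqref{regularized.max.eqn1}, the identity $\nabla u_{0\varepsilon}=(\nabla u_{0})\ast\rho_{\varepsilon}$ for \eqref{regularized.max.eqn1a}, and the placement of the second derivative on the mollifier together with the scaling $\|\partial_{j}\rho_{\varepsilon}\|_{L^{1}}=\varepsilon^{-1}\|\partial_{j}\rho\|_{L^{1}}$, which is what produces the $1/\varepsilon$ in \eqref{regularized.max.eqn1b}.
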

\begin{theorem}\label{timedervative.thm1}
 Assume Hypothesis C. Let $\left(u^{\varepsilon}\right)$ be the sequence of solutions to \eqref{regularized.IBVP}. Then 
 $\left\{\frac{\partial u^{\varepsilon}}{\partial t}\right\}$ is compact in $H^{-1}_{loc}(\Omega_{T})$.
\end{theorem}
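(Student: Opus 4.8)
The plan is to read the time derivative off the equation \eqref{regularized.IBVP.a},
\begin{equation*}
\frac{\partial u^{\varepsilon}}{\partial t} = -\nabla\cdot f(u^{\varepsilon}) + \varepsilon\,\nabla\cdot\big(B(u^{\varepsilon})\nabla u^{\varepsilon}\big),
\end{equation*}
and to treat the two terms separately. The viscous term is exactly the quantity shown to converge to $0$ in $H^{-1}(\Omega_{T})$ in the proof of Theorem \ref{chap9thm2}: taking the trivial entropy $\eta(\lambda)=\lambda$ in \eqref{compactnessequation3} (so $\eta''\equiv 0$ and the measure term \eqref{compactnessequation4} is absent) gives $\varepsilon\,\nabla\cdot(B(u^{\varepsilon})\nabla u^{\varepsilon})\to 0$ in $H^{-1}(\Omega_{T})$, using only the energy bound of Theorem \ref{Compactness.lemma.1}. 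Since a sequence converging strongly in $H^{-1}_{loc}(\Omega_{T})$ is relatively compact there, it suffices to prove that $\{\nabla\cdot f(u^{\varepsilon})\}$ lies in a compact subset of $H^{-1}_{loc}(\Omega_{T})$; then $\{\partial_{t}u^{\varepsilon}\}$, being the sum of a relatively compact sequence and a null sequence, is itself relatively compact.

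To prove compactness of $\{\nabla\cdot f(u^{\varepsilon})\}$ I would invoke Murat's Lemma \ref{chap9lem2} with $\phi_{\varepsilon}=-\nabla\cdot f(u^{\varepsilon})$. For the $W^{-1,p}$ bound with $p>2$: by the maximum principle (Theorem \ref{regularized.chap3thm1}) each $u^{\varepsilon}$ takes values in $I$, so $f(u^{\varepsilon})$ is bounded in $L^{\infty}(\Omega_{T})\hookrightarrow L^{p}(\Omega_{T})$ for every finite $p$, whence $\nabla\cdot f(u^{\varepsilon})$ is bounded in $W^{-1,p}(\Omega_{T})$ for every $p$, in particular for some $p>2$. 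For the additive splitting I would use
\begin{equation*}
-\nabla\cdot f(u^{\varepsilon}) = \underbrace{\frac{\partial u^{\varepsilon}}{\partial t}}_{=:\,\psi_{\varepsilon}} \;+\; \underbrace{\big(-\varepsilon\,\nabla\cdot(B(u^{\varepsilon})\nabla u^{\varepsilon})\big)}_{=:\,\xi_{\varepsilon}},
\end{equation*}
where $\{\xi_{\varepsilon}\}$ lies in a compact set of $H^{-1}(\Omega_{T})$ since it converges to $0$. Everything then reduces to showing that $\{\psi_{\varepsilon}\}=\{\partial_{t}u^{\varepsilon}\}$ is bounded in the space of measures $M(\Omega_{T})$, for which it is enough to bound $\|\partial_{t}u^{\varepsilon}\|_{L^{1}(\Omega_{T})}$ uniformly in $\varepsilon$, because $L^{1}(\Omega_{T})\hookrightarrow M(\Omega_{T})$.

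The heart of the proof — and the step I expect to be the main obstacle — is this uniform $L^{1}$ estimate, which I would obtain by a parabolic $L^{1}$-contraction argument for $v^{\varepsilon}:=\partial_{t}u^{\varepsilon}$. First, at the initial time, differentiating \eqref{regularized.IBVP.a} and using \eqref{regularized.IBVP.c},
\begin{equation*}
v^{\varepsilon}(\cdot,0) = -\nabla\cdot f(u_{0\varepsilon}) + \varepsilon\,B'(u_{0\varepsilon})|\nabla u_{0\varepsilon}|^{2} + \varepsilon\,B(u_{0\varepsilon})\,\Delta u_{0\varepsilon},
\end{equation*}
and the $L^{1}(\Omega)$-norms of the three terms are bounded uniformly in $\varepsilon$: the first by $\|f'\|_{\infty}\,\|\nabla u_{0\varepsilon}\|_{L^{1}}\le \|f'\|_{\infty}\,TV_{\Omega}(u_{0})$ via \eqref{regularized.max.eqn1a}, the second by $\varepsilon\,\|B'\|_{L^{\infty}(I)}\,\|\nabla u_{0\varepsilon}\|_{L^{2}}^{2}\le \varepsilon\,\|B'\|_{L^{\infty}(I)}\,\|\nabla u_{0}\|_{L^{2}}^{2}$ (mollification does not increase the $L^{2}$ norm), and the third by $\varepsilon\,\|B\|_{\infty}\,\|\Delta u_{0\varepsilon}\|_{L^{1}}\le \|B\|_{\infty}\,C\,TV_{\Omega}(u_{0})$ via \eqref{regularized.max.eqn1b}. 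Hence $\|v^{\varepsilon}(\cdot,0)\|_{L^{1}(\Omega)}\le C_{0}$ with $C_{0}$ independent of $\varepsilon$. Next, differentiating the equation in $t$ yields the linear parabolic equation
\begin{equation*}
v^{\varepsilon}_{t} + \nabla\cdot\big(f'(u^{\varepsilon})v^{\varepsilon}\big) = \varepsilon\,\nabla\cdot\big(B(u^{\varepsilon})\nabla v^{\varepsilon}\big) + \varepsilon\,\nabla\cdot\big(B'(u^{\varepsilon})v^{\varepsilon}\nabla u^{\varepsilon}\big),
\end{equation*}
legitimate because $u^{\varepsilon}\in C^{4+\beta,\frac{4+\beta}{2}}(\overline{\Omega_{T}})$ with $u^{\varepsilon}_{tt}\in C(\overline{\Omega_{T}})$ by Theorem \ref{chapHR85thm5ABCD}. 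Multiplying by a regularized sign $\mathrm{sgn}_{\delta}(v^{\varepsilon})$ and integrating over $\Omega$, the boundary contributions vanish since $u^{\varepsilon}\equiv 0$ on $\partial\Omega\times(0,T)$ forces $v^{\varepsilon}\equiv 0$ there; the principal viscous term $-\varepsilon\int_{\Omega}B(u^{\varepsilon})\,\mathrm{sgn}_{\delta}'(v^{\varepsilon})|\nabla v^{\varepsilon}|^{2}\,dx\le 0$ carries the favourable sign because $B\ge r>0$; and the remaining convective and genuinely quasilinear terms are supported in $\{|v^{\varepsilon}|<\delta\}$ and are dominated by $\int_{\{|v^{\varepsilon}|<\delta\}}|\nabla v^{\varepsilon}|\,dx$, which tends to $0$ as $\delta\to 0$ by Lemma \ref{compactness12.lem1} (applied for fixed $\varepsilon$, where $v^{\varepsilon}$ is $C^{1}$). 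Letting $\delta\to 0$ gives $\frac{d}{dt}\|v^{\varepsilon}(\cdot,t)\|_{L^{1}(\Omega)}\le 0$, so $\|v^{\varepsilon}(\cdot,t)\|_{L^{1}(\Omega)}\le C_{0}$ and $\|\partial_{t}u^{\varepsilon}\|_{L^{1}(\Omega_{T})}\le T\,C_{0}$.

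With this uniform measure bound, Murat's Lemma \ref{chap9lem2} yields that $\{\nabla\cdot f(u^{\varepsilon})\}$ is relatively compact in $H^{-1}(\Omega_{T})$, hence in $H^{-1}_{loc}(\Omega_{T})$ (if only the local statement is wanted one may localize with a cutoff $\varphi\in C_{c}^{\infty}(\Omega_{T})$, the extra Leibniz terms being of lower order and themselves convergent). Adding back the null sequence $\varepsilon\,\nabla\cdot(B(u^{\varepsilon})\nabla u^{\varepsilon})$ shows that $\{\partial_{t}u^{\varepsilon}\}$ is relatively compact in $H^{-1}_{loc}(\Omega_{T})$, as claimed. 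The delicate point throughout is the $L^{1}$-contraction: one must justify the limit $\delta\to 0$ in the presence of the quasilinear term $\varepsilon\,\nabla\cdot(B'(u^{\varepsilon})v^{\varepsilon}\nabla u^{\varepsilon})$, which is precisely where Lemma \ref{compactness12.lem1} and the regularity of Theorem \ref{chapHR85thm5ABCD} are needed.
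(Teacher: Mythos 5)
Your proposal is correct and follows essentially the same route as the paper: the heart in both cases is the uniform $L^{1}(\Omega_{T})$ bound on $\partial_{t}u^{\varepsilon}$, obtained by differentiating the equation in $t$, multiplying by a regularized sign, using the favourable sign of the principal viscous term, killing the remaining terms with Lemma \ref{compactness12.lem1}, and bounding $\partial_{t}u^{\varepsilon}(\cdot,0)$ through the mollifier estimates of Lemma \ref{regularized.lem1}, after which compactness follows from Murat's Lemma \ref{chap9lem2}. The only cosmetic difference is the bookkeeping in the Murat step: you apply the lemma to $\nabla\cdot f(u^{\varepsilon})$, split as $\partial_{t}u^{\varepsilon}$ (bounded in measures) plus the viscous term (null in $H^{-1}(\Omega_{T})$ by the energy estimate), and then transfer compactness back to $\partial_{t}u^{\varepsilon}$ through the equation, whereas the paper applies the lemma directly to $\partial_{t}u^{\varepsilon}$ itself, with trivial compact part and with the $L^{\infty}$ bound on $u^{\varepsilon}$ supplying the $W^{-1,p}$ ($p>2$) hypothesis.
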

\begin{proof}
 We prove Theorem \ref{timedervative.thm1} in two steps. In Step-1, we show that the sequence $\left\{\frac{\partial u^{\varepsilon}}{\partial t}\right\}$
 is bounded in $L^{1}(\Omega_{T})$ and in Step-2, we use Murat's Lemma \ref{chap9lem2} to show that $\left\{\frac{\partial u^{\varepsilon}}{\partial t}\right\}$
 is compact in $H^{-1}_{loc}(\Omega_{T})$.\\
 \textbf{Step-1:} We now repeat the proof of the $L^{1}(\Omega_{T})-$ estimates of the time derivatives of sequence of solutions to generalized viscosity 
 problem \eqref{ibvp.parab} from \cite{Ramesh} to show that the sequence $\left\{\frac{\partial u^{\varepsilon}}{\partial t}\right\}$ is bounded in 
 $L^{1}(\Omega_{T})$. \\
  We show the existence of constant $C_{1}> 0$ such that for every $\varepsilon>0$,
\begin{eqnarray}\label{B.BVestimate26}
 \left\|\frac{\partial u^{\varepsilon}}{\partial t}\right\|_{L^{1}(\Omega_{T})}\leq C_{1}.
\end{eqnarray}
Differentiating the equation  \eqref{regularized.IBVP} with respect to $t$, multiplying by $sg_{n}\left(\frac{\partial u^{\varepsilon}}{\partial t}\right)$ and integrating over $\Omega$, we get
\begin{eqnarray}
 \int_{\Omega} u^\varepsilon_{tt}\,sg_{n}(u^\varepsilon_{t})\,dx + \displaystyle\sum_{j=1}^{d}\int_{\Omega}\Big[\frac{\partial}{\partial x_{j}}\left(f_{j}^{\prime}(u^{\varepsilon})\,\frac{\partial u^{\varepsilon}}{\partial t}\right)\Big]sg_{n}\left(\frac{\partial u^{\varepsilon}}{\partial t}\right)\,dx \hspace*{1in}\nonumber\\=\varepsilon\displaystyle\sum_{j=1}^{d}\int_{\Omega}\,sg_{n}\left(\frac{\partial u^{\varepsilon}}{\partial t}\right)\,\frac{\partial}{\partial x_{j}}\Big(B^{'}(u^{\varepsilon})\,\frac{\partial u^{\varepsilon}}{\partial t}\,\frac{\partial u^{\varepsilon}}{\partial x_{j}} + B(u^{\varepsilon})\,\frac{\partial}{\partial x_{j}}\left(\frac{\partial u^{\varepsilon}}{\partial t}\right)\Big)\,dx.\label{compactness12.eqn3}
 \end{eqnarray}
Using integration by parts in \eqref{compactness12.eqn3} and using $sg_{n}\left(\frac{\partial u^{\varepsilon}}{\partial t}\right)=0\,\,\mbox{on}\,\,\partial\Omega\times(0,T)$, we have
\begin{eqnarray}
  \int_{\Omega} u^\varepsilon_{tt}\,sg_{n}(u^\varepsilon_{t})\,dx = \displaystyle\sum_{j=1}^{d}\int_{\Omega}f_{j}^{\prime}(u^{\varepsilon})\,\frac{\partial u^{\varepsilon}}{\partial t}\,sg^{'}_{n}\left(\frac{\partial u^{\varepsilon}}{\partial t}\right)\,\frac{\partial}{\partial x_{j}}\left(\frac{\partial u^{\varepsilon}}{\partial t}\right)\,dx \hspace*{1in}\nonumber\\
-\varepsilon\displaystyle\sum_{j=1}^{d}\int_{\Omega}B^{'}(u^{\varepsilon})\,\frac{\partial u^{\varepsilon}}{\partial t}\,\frac{\partial u^{\varepsilon}}{\partial x_{j}}\,sg^{\prime}_{n}\left(\frac{\partial u^{\varepsilon}}{\partial t}\right)\,\frac{\partial}{\partial x_{j}}\left(\frac{\partial u^{\varepsilon}}{\partial t}\right)\,dx\nonumber\\
  -\varepsilon\displaystyle\sum_{j=1}^{d}\int_{\Omega}B(u^{\varepsilon})\,\left(\frac{\partial}{\partial x_{j}}\left(\frac{\partial u^{\varepsilon}}{\partial t}\right)\right)^{2}\,sg^{\prime}_{n}\left(\frac{\partial u^{\varepsilon}}{\partial t}\right)\,dx.  \label{compactness12.eqn11}
\end{eqnarray}
We now prove that first two terms on the RHS of \eqref{compactness12.eqn11} tend to zero as $\varepsilon\to 0$. That is,
\begin{eqnarray}
\displaystyle\lim_{n\to\infty}\displaystyle\sum_{j=1}^{d}\int_{\Omega}f_{j}^{\prime}(u^{\varepsilon})\,\frac{\partial u^{\varepsilon}}{\partial t}\,sg^{'}_{n}\left(\frac{\partial u^{\varepsilon}}{\partial t}\right)\,\frac{\partial}{\partial x_{j}}\left(\frac{\partial u^{\varepsilon}}{\partial t}\right)\,dx =0,\label{compactness12.eqn6}\\
\displaystyle\lim_{n\to\infty}\varepsilon\displaystyle\sum_{j=1}^{d}\int_{\Omega}B^{'}(u^{\varepsilon})\,\frac{\partial u^{\varepsilon}}{\partial t}\,\frac{\partial u^{\varepsilon}}{\partial x_{j}}\,sg^{\prime}_{n}\left(\frac{\partial u^{\varepsilon}}{\partial t}\right)\,\frac{\partial}{\partial x_{j}}\left(\frac{\partial u^{\varepsilon}}{\partial t}\right)\,dx=0.\label{compactness12.eqn9}
\end{eqnarray}
\noindent{\bf Proof of \eqref{compactness12.eqn6}:}  
Since $\Big|\frac{\partial u^{\varepsilon}}{\partial t}\Big|\,sg^{'}_{n}\left(\frac{\partial u^{\varepsilon}}{\partial t}\right) < 1$, note that
\begin{eqnarray}\label{compactness12.eqn5B}
\left|\displaystyle\sum_{j=1}^{d}\int_{\left\{x\in\Omega\,:\,|\frac{\partial u^{\varepsilon}}{\partial t}|<\frac{1}{n}\right\}}\frac{\partial u^{\varepsilon}}{\partial t}sg^{'}_{n}\left(\frac{\partial u^{\varepsilon}}{\partial t}\right)\,\left(f_{1}^{\prime}(u^{\varepsilon}),\cdots,f_{d}^{\prime}(u^{\varepsilon})\right).\nabla\left(\frac{\partial u^{\varepsilon}}{\partial t}\right)\,dx\right|\hspace*{0.5cm}\nonumber\\
 \leq \sqrt{d}\,\displaystyle\max_{1\leq j\leq d}\left(\displaystyle\sup_{y\in I}\left|f^{'}_{j}(y)\right|\right)\,\int_{\left\{x\in\Omega\,:\,|\frac{\partial u^{\varepsilon}}{\partial t}|<\frac{1}{n}\right\}}\left|\nabla\left(\frac{\partial u^{\varepsilon}}{\partial t}\right) \right|\,dx.\hspace*{1cm}
\end{eqnarray}
 
Applying Lemma \ref{compactness12.lem1} with $v=\frac{\partial u^{\varepsilon}}{\partial t}$, the inequality \eqref{compactness12.eqn5B} gives \eqref{compactness12.eqn6}. 

\noindent{\bf Proof of \eqref{compactness12.eqn9}:}
Observe that
\begin{eqnarray}\label{compactness12.eqn8B}
 \left|\varepsilon\displaystyle\sum_{j=1}^{d}\int_{\Omega}B^{'}(u^{\varepsilon})\,\frac{\partial u^{\varepsilon}}{\partial t}\,\frac{\partial u^{\varepsilon}}{\partial x_{j}}\,sg^{\prime}_{n}\left(\frac{\partial u^{\varepsilon}}{\partial t}\right)\,\frac{\partial}{\partial x_{j}}\left(\frac{\partial u^{\varepsilon}}{\partial t}\right)\,dx \right|\hspace*{3cm}\nonumber\\ \leq \varepsilon\,\sqrt{d}\|B^{'}\|_{L^{\infty}(I)}\,\displaystyle\max_{1\leq j\leq d}\left(\left\|\frac{\partial u^{\varepsilon}}{\partial x_{j}}\right\|_{L^{\infty}(\Omega_{T})}\right) \int_{\left\{ x\in\Omega\,:\,\left|\frac{\partial u^{\varepsilon}}{\partial t}\right|<\frac{1}{n}\right\}}\left|\nabla\left(\frac{\partial u^{\varepsilon}}{\partial t}\right)\right|\,dx.
\end{eqnarray}

Applying Lemma \ref{compactness12.lem1} with $v=\frac{\partial u^{\varepsilon}}{\partial t}$, the inequality \eqref{compactness12.eqn8B} yields \eqref{compactness12.eqn9}. 

\noindent Since the third term on RHS of \eqref{compactness12.eqn13} is non-positive for every $\varepsilon>0$, on taking limit supremum on both sides of \eqref{compactness12.eqn11} yields
\begin{eqnarray}\label{compactness12.eqn13}
 \displaystyle\limsup_{n\to\infty}\,\int_{\Omega} u^\varepsilon_{tt}\,\,sg_{n}(u^\varepsilon_{t})\,dx &\leq& 0
\end{eqnarray}
in view of \eqref{compactness12.eqn6} and \eqref{compactness12.eqn9}. Note that the limit supremum in \eqref{compactness12.eqn13} is actually a limit, and as a consequence we get 
\begin{eqnarray}\label{compactness12.eqn14}
 \displaystyle\,\int_{\Omega} \frac{\partial}{\partial t}\left|u^\varepsilon_{t}\right|\,dx &\leq& 0
\end{eqnarray}
Integrating w.r.t. $t$ on both sides of \eqref{compactness12.eqn14}, and applying Fubini's theorem yields
\begin{eqnarray}\label{compactness12.eqn141}
 \int_{\Omega}\int_0^t \frac{\partial}{\partial t}\left|u^\varepsilon_{t}\right|\,d\tau\,dx &\leq& 0
\end{eqnarray}
Thus we get
\begin{eqnarray}\label{compactness12.eqn142}
 \int_{\Omega} \left(\left|u^\varepsilon_{t}(x,t)\right|-\left|u^\varepsilon_{t}(x,0)\right|\right)\,dx &\leq& 0
\end{eqnarray}
From the equation \eqref{regularized.IBVP}, we get 
\begin{eqnarray}\label{compactness12.eqn16}
 \frac{\partial u^{\varepsilon}}{\partial t}(x,0) &=& \varepsilon\displaystyle\sum_{j=1}^{d}\Big[B(u_{0\varepsilon})\,\frac{\partial^{2}u^{\varepsilon}}{\partial x_{j}^{2}}(x,0) + B^{'}(u_{0\varepsilon})\left(\frac{\partial u^{\varepsilon}}{\partial x_{j}}\right)^{2}(x,0)\Big]-\displaystyle\sum_{j=1}^{d} f_{j}^{'}(u_{0\varepsilon})\frac{\partial u^{\varepsilon}}{\partial x_{j}}(x,0)\nonumber\\
 &=&\varepsilon\displaystyle\sum_{j=1}^{d}\Big[B(u_{0\varepsilon})\,\frac{\partial^{2}u_{0\varepsilon}}{\partial x_{j}^{2}} + B^{'}(u_{0\varepsilon})\left(\frac{\partial u_{0\varepsilon}}{\partial x_{j}}\right)^{2}\Big]-\displaystyle\sum_{j=1}^{d} f_{j}^{'}(u_{0\varepsilon})\frac{\partial u_{0\varepsilon}}{\partial x_{j}}.
\end{eqnarray}
\textbf{Claim:} Let $u_{0}\in W^{2,1}(\Omega)\cup L^{\infty}_{c}(\Omega)$. Then for all $\varepsilon > 0$, there exists a constant
$C> 0$ such that 
\begin{eqnarray}\nonumber
 \int_{\Omega}\left(\frac{\partial u_{0\varepsilon}}{\partial x_{j}}\right)^{2}\,dx\leq C
\end{eqnarray}
Since $\mbox{supp}(u_{0})=K$ is a compact set, then we have $\mbox{supp}\left(\frac{\partial u_{0}}{\partial x_{j}}\right)\subset
\mbox{supp}(u_{0})=K$. Denote $R:=\mbox{dist}(K, \partial\Omega)$. Choose $0<\varepsilon <\frac{R}{4}$ and denote 
$$W:=\left\{x\in\Omega\,:\,0\leq\mbox{dist}(x,K)<\frac{R}{2}\right\}.$$
Then $K\subset W$. We consider  
\begin{eqnarray}\label{rsb21.eqn101}
 \int_{\Omega}\left(\frac{\partial u_{0\varepsilon}}{\partial x_{j}}\right)^{2}\,dx &=& \int_{\Omega}\left(\int_{\Omega}\frac{1}{\varepsilon^{d}}
 \rho\left(\frac{x-y}{\varepsilon}\right)\,\left(\frac{\partial u_{0}}{\partial x_{j}}(y)\,dy\right)\right)^{2}\,dx\nonumber\\
 &=&\int_{\Omega}\left(\int_{B(x,\varepsilon)}\frac{1}{\varepsilon^{d}}
 \rho\left(\frac{x-y}{\varepsilon}\right)\,\left(\frac{\partial u_{0}}{\partial x_{j}}(y)\,dy\right)\right)^{2}\,dx
\end{eqnarray}
Using the change of variable $\frac{x-y}{\varepsilon}=z$ in \eqref{rsb21.eqn101}, we have 
\begin{eqnarray}\label{rsb21.eqn102}
 \int_{\Omega}\left(\frac{\partial u_{0\varepsilon}}{\partial x_{j}}\right)^{2}\,dx &=& \int_{\Omega}\left(\int_{B(0,1)}(-1)^{d}
 \rho\left(z\right)\,\left(\frac{\partial u_{0}}{\partial x_{j}}(x-\varepsilon z)\,dz\right)\right)^{2}\,dx
\end{eqnarray}
Since $\left\{x-\varepsilon z\,\, :\,\, x\in K\,\mbox{and}\,\,z\in B(0,1)\right\}\subset W$, therefore from \eqref{rsb21.eqn102}, we have
\begin{eqnarray}\label{rsb21.eqn103}
  \int_{\Omega}\left(\frac{\partial u_{0\varepsilon}}{\partial x_{j}}\right)^{2}\,dx &=& \int_{W}\left(\int_{B(0,1)}(-1)^{d}
 \rho\left(z\right)\,\left(\frac{\partial u_{0}}{\partial x_{j}}(x-\varepsilon z)\,dz\right)\right)^{2}\,dx.
\end{eqnarray}
For $x\in W$, we now compute
\begin{eqnarray}\label{rsb21.eqn104}
 \frac{\partial u_{0\varepsilon}}{\partial x_{j}}(x) &=& \int_{B(x,\varepsilon)}\frac{1}{\varepsilon^{d}}
 \rho\left(\frac{x-y}{\varepsilon}\right)\,\left(\frac{\partial u_{0}}{\partial x_{j}}\right)(y)\,dy.
\end{eqnarray}
Using the change of variable $\frac{x-y}{\varepsilon}=z$ in \eqref{rsb21.eqn104}, we get
\begin{eqnarray}\label{rsb21.eqn105}
 \frac{\partial u_{0\varepsilon}}{\partial x_{j}}(x) &=& \int_{B(0,1)}
 (-1)^{d}\rho(z)\,\frac{\partial u_{0}}{\partial x_{j}}(x-\varepsilon z)\,dz.
\end{eqnarray}
Taking modulus on both sides of \eqref{rsb21.eqn105} and using H\"{o}lder inequality, we get
\begin{eqnarray}\label{rsb21.eqn106}
 \left|\frac{\partial u_{0\varepsilon}}{\partial x_{j}}(x)\right| &\leq& \int_{B(0,1)}\,\rho(z)\,\left|\left(\frac{\partial u_{0}}{\partial x_{j}}\right)(x-\varepsilon z
 )\right|\,dy,\nonumber\\
 &\leq& \left(\int_{B(0,1)}\rho(z)\,dz\right)^{2}\,\,\left(\int_{B(0,1)}\rho(z)\,\,\left|\frac{\partial u_{0}}{\partial x_{j}}(x-\varepsilon z)\right|^{2}
 \,\,dz\right)^{\frac{1}{2}}.
\end{eqnarray}
Applying Lemma \ref{regularized.lem1} in \eqref{compactness12.eqn16}, we get the existence of a constant $C_{1} > 0$, such that 
\begin{eqnarray}\label{rsb21.eqn107}
 \int_{\Omega}\left|\frac{\partial u^{\varepsilon}}{\partial t}\right|\,dx\,&\leq& C_{1}\|B\|_{L^{\infty}(I)} + \|B^{\prime}\|_{L^{\infty}(I)}
 \displaystyle\sum_{j=1}^{d}\left\|\frac{\partial u_{0}}{\partial x_{j}}\right\|_{L^{2}(\Omega)} + \displaystyle\max_{1\leq j\leq d}
\left(\|f_{j}^{\prime}\|_{L^{\infty}(I)}\right)\,\,TV_{\Omega}(u_{0}).\nonumber\\
{}
 \end{eqnarray}
Therefore we have
\begin{eqnarray}\label{rsb21.eqn108}
 \int_{\Omega_{T}}\left|\frac{\partial u^{\varepsilon}}{\partial t}\right|\,dx\,&\leq& T \left(C_{1}\|B\|_{L^{\infty}(I)} + \|B^{\prime}\|_{L^{\infty}(I)}
 \displaystyle\sum_{j=1}^{d}\left\|\frac{\partial u_{0}}{\partial x_{j}}\right\|_{L^{2}(\Omega)} + \displaystyle\max_{1\leq j\leq d}
\left(\|f_{j}^{\prime}\|_{L^{\infty}(I)}\right)\,\,TV_{\Omega}(u_{0})\right).\nonumber\\
{}
 \end{eqnarray}
 \textbf{Step 2:} Since $L^{1}(\Omega_{T})$ is compactly imbedded in the space of measures $M(\Omega_{T})$. Therefore we have
 \begin{eqnarray}\label{rsb21.eqn109}
  \left\|\frac{\partial u^{\varepsilon}}{\partial t}\right\|_{M(\Omega_{T})}&\leq& \left\|\frac{\partial u^{\varepsilon}}{\partial t}\right\|_{L^{1}(\Omega_{T})}
 \end{eqnarray}
In view of \eqref{rsb21.eqn108}, we see that $\left\{\frac{\partial u^{\varepsilon}}{\partial t}\right\}$ is bounded in the space of
measures $M(\Omega_{T})$.
The sequence $\left\{\frac{\partial u^{\varepsilon}}{\partial t}\right\}$ is bounded in $W^{-1,\infty}(\Omega_{T})$ as 
$\|u^{\varepsilon}\|_{L^{\infty}(\Omega_{T})}\leq \|u_{0}\|_{L^{\infty}(\Omega)}$. An application of Murat's Lemma \ref{chap9lem2},
 We get that the sequence $\left\{\frac{\partial u^{\varepsilon}}{\partial t}\right\}$ is compact in $H^{-1}(\Omega_{T})$.
\end{proof}

We need the following result for extraction of {\it a.e.} convergent subsequence of solutions $\left\{u^{\varepsilon}\right\}$  to generalized viscosity problem
\begin{theorem}\label{compactness12.thm2}
 {\rm Assume \textbf{Hypothesis D} and let $\left\{u^{\varepsilon}\right\}$ be the sequence of solutions to 
 viscosity problem \eqref{regularized.IBVP} and $d=2$. Then there exists a subsequence $\left\{u^{\varepsilon_{k}}\right\}$ of $\left\{u^{\varepsilon}\right\}$ and a function $u$ such that 
 \begin{equation}\label{compactness12.eqn19}
  u^{\varepsilon_{k}}\to u\,\,\mbox{as}\,k\to\infty.
 \end{equation}
 }
\end{theorem}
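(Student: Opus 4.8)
The plan is to show that the Young measure associated with the sequence $\{u^\varepsilon\}$ reduces to a Dirac mass a.e., which then upgrades the weak-$\ast$ limit to strong (hence a.e.) convergence of a subsequence, in complete parallel with the one-dimensional Theorem \ref{chap9thm3}. First I would use the maximum principle (Theorem \ref{regularized.chap3thm1}) and the Banach--Alaoglu theorem to extract a subsequence with $u^\varepsilon\rightharpoonup u$ in $L^\infty(\Omega_T)$-weak$^\ast$, and then apply the Young measure theorem (Theorem \ref{chap9thm1}) to obtain a family $\{\nu_{x,t}\}$ with $F(u^\varepsilon)\rightharpoonup\langle\nu_{x,t},F\rangle$ for every continuous $F$. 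The goal is then to prove $\nu_{x,t}=\delta_{u(x,t)}$ for a.e. $(x,t)\in\Omega_T$.

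The key preliminary step, and the place where Theorem \ref{timedervative.thm1} enters, is to promote the $H^{-1}$-compactness of $\{\partial_t u^\varepsilon\}$ to every entropy. Writing $\partial_t\eta(u^\varepsilon)=\eta'(u^\varepsilon)\,\partial_t u^\varepsilon$, the $L^1(\Omega_T)$-bound on $\partial_t u^\varepsilon$ established in Step~1 of the proof of Theorem \ref{timedervative.thm1} shows that $\{\partial_t\eta(u^\varepsilon)\}$ is bounded in $M(\Omega_T)$, while boundedness of $\eta(u^\varepsilon)$ in $L^\infty$ gives boundedness in $W^{-1,\infty}(\Omega_T)$; Murat's Lemma \ref{chap9lem2} then yields that $\{\partial_t\eta(u^\varepsilon)\}$ is compact in $H^{-1}_{loc}(\Omega_T)$. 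Combining this with the entropy equation \eqref{chap9eqn4}, the fact (established in the proof of Theorem \ref{chap9thm2}) that the viscous term tends to $0$ in $H^{-1}(\Omega_T)$, the boundedness of the entropy-production measure in $M(\Omega_T)$, and a second application of Murat's lemma, I would deduce that for every $C^2$ entropy-flux pair the spatial divergence $\partial_{x_1}q_1(u^\varepsilon)+\partial_{x_2}q_2(u^\varepsilon)$ is also compact in $H^{-1}_{loc}(\Omega_T)$.

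Next I would exploit the feature peculiar to two space dimensions: in the plane the rotation $(q_1,q_2)\mapsto(q_2,-q_1)$ converts divergence into curl, which is exactly why the result is stated for $d=2$. Given two entropy pairs $(\eta_1,q^{(1)})$ and $(\eta_2,q^{(2)})$, I would form the space-time fields (coordinates $(t,x_1,x_2)$)
\[ \mathcal{G}=\big(0,\,q^{(1)}_1(u^\varepsilon),\,q^{(1)}_2(u^\varepsilon)\big),\qquad \mathcal{R}=\big(0,\,q^{(2)}_2(u^\varepsilon),\,-q^{(2)}_1(u^\varepsilon)\big). \]
The divergence of $\mathcal{G}$ is the spatial divergence of $q^{(1)}$, hence compact in $H^{-1}_{loc}$, while the three components of $\mathrm{curl}\,\mathcal{R}$ are, up to sign, $\partial_t q^{(2)}_1(u^\varepsilon)$, $\partial_t q^{(2)}_2(u^\varepsilon)$ and the spatial divergence of $q^{(2)}$, all compact in $H^{-1}_{loc}$ by the previous step. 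Since the fields are bounded in $L^\infty(\Omega_T)\subset L^2_{loc}$, the div--curl Lemma \ref{chap9lem1} applies to $\mathcal{G}\cdot\mathcal{R}=q^{(1)}_1q^{(2)}_2-q^{(1)}_2q^{(2)}_1$ and yields the commutation relation
\[ \big\langle\nu_{x,t},\,q^{(1)}_1 q^{(2)}_2-q^{(1)}_2 q^{(2)}_1\big\rangle=\langle\nu_{x,t},q^{(1)}_1\rangle\langle\nu_{x,t},q^{(2)}_2\rangle-\langle\nu_{x,t},q^{(1)}_2\rangle\langle\nu_{x,t},q^{(2)}_1\rangle \]
valid for every pair of entropies.

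Finally I would specialize the first pair to the physical one $\eta_1(\lambda)=\lambda$, $q^{(1)}=f$, and the second to an entropy adapted to the flux in the spirit of the one-dimensional choice $g(\lambda)=\int_0^\lambda (f')^2$, and rewrite the commutation relation as the vanishing of a covariance, namely $\tfrac12\iint \det\big[A(\lambda,\mu),B(\lambda,\mu)\big]\,d\nu_{x,t}(\lambda)\,d\nu_{x,t}(\mu)=0$ where $A$ and $B$ are the vector increments $\int_\mu^\lambda \eta_i'(s)f'(s)\,ds$. Using a Cauchy--Schwarz inequality to force the integrand to be sign-definite, I would conclude that $\nu_{x,t}$ collapses to a single point and equals $\delta_{u(x,t)}$, giving $u^{\varepsilon_k}\to u$ a.e. The hard part will be precisely this last step: in contrast to the scalar one-dimensional case, a single rotation family of commutation relations in two space dimensions does not by itself force the measure to be Dirac, and organizing the family over all entropy pairs together with the Schwarz argument so as to extract variance decay — under a genuine-nonlinearity (non-degeneracy) condition on $f$ analogous to the density hypothesis in Theorem \ref{chap9thm3} — is the technical heart of the proof.
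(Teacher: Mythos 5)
Your overall strategy coincides with the paper's: the paper also follows the two\-/dimensional compensated compactness scheme of Tadmor, Rascle and Bagnerini. Your preliminary steps are sound and essentially identical in content to the paper's: maximum principle plus Banach--Alaoglu and Young measures; the $L^{1}(\Omega_T)$ bound on $\partial_t u^{\varepsilon}$ from Theorem \ref{timedervative.thm1} fed into Murat's Lemma \ref{chap9lem2} to get $H^{-1}$-compactness of time derivatives of compositions; and a quadratic weak-continuity statement for the Young measure. The paper packages this last step not through your rotated space-time div--curl fields but through the wave-cone theorem of Dacorogna applied to the four functions $F_{11}(u^{\varepsilon}),F_{12}(u^{\varepsilon}),F_{12}(u^{\varepsilon}),F_{22}(u^{\varepsilon})$, with $F_{ij}(\lambda)=\int_0^{\lambda}f_i'(s)f_j'(s)\,ds$ (these arise from the specific entropies $\eta=f_1$ and $\eta=f_2$); the two devices are equivalent, and your commutation relation specialized to these pairs is exactly the paper's identity $\big\langle \nu_{x,t},(F_{11}-\overline{F}_{11})(F_{22}-\overline{F}_{22})-(F_{12}-\overline{F}_{12})^{2}\big\rangle=0$.

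The genuine gap is the step you explicitly defer as ``the technical heart'': collapsing $\nu_{x,t}$ to a Dirac mass. Your proposed remedy --- organizing the relation ``over all entropy pairs'' --- is not what is needed and is not how the proof closes; the paper uses exactly one determinant identity together with Tadmor's comparison-function trick, which your sketch is missing. Concretely: set $D(w)=\big(F_{11}(w)-F_{11}(c)\big)\big(F_{22}(w)-F_{22}(c)\big)-\big(F_{12}(w)-F_{12}(c)\big)^{2}$, nonnegative by Cauchy--Schwarz; since $F_{11}$ is continuous and nondecreasing and $\overline{F}_{11}$ is a $\nu_{x,t}$-average of $F_{11}$, the intermediate value theorem yields a function $c=c(x,t)$ with $F_{11}(c)=\overline{F}_{11}$. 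Expanding $D(u^{\varepsilon})$ around the weak limits and using the determinant identity, the weak limit of $D(u^{\varepsilon})$ equals $-\big(\overline{F}_{12}-F_{12}(c)\big)^{2}\le 0$, while $D(u^{\varepsilon})\ge 0$ pointwise; hence the limit is zero and, nonnegativity plus weak convergence to zero giving strong $L^{1}$ convergence, $D(u^{\varepsilon})\to 0$ a.e.\ along a subsequence. Finally, under the assumption that $f_1'$ and $f_2'$ are linearly independent (no nontrivial interval on which $\xi_1 f_1'+\xi_2 f_2'\equiv 0$ for some unit $\xi$), the Cauchy--Schwarz inequality is strict, so $w\mapsto D(w)$ has a strict minimum at $w=c$, and $D(u^{\varepsilon})\to 0$ forces $u^{\varepsilon}\to c=u$ a.e. You correctly flagged that such a non-degeneracy hypothesis is unavoidable (the paper itself introduces it mid-proof, though it is absent from Hypothesis D and the theorem statement), but without the choice of $c$, the nonnegativity argument, and the strict-minimum conclusion, your proposal stops short of proving the theorem.
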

\begin{proof}
 Multiplying the generalized viscosity problem by $f_{1}^{'}$, $f_{2}^{'}$, we get
\begin{eqnarray}\label{compactness12.eqn20}
 \left(f_{1}^{'}(u^{\varepsilon})\right)^{2}\,\frac{\partial u^{\varepsilon}}{\partial x_{1}} + f_{1}^{'}(u^{\varepsilon})f_{2}^{'}(u^{\varepsilon})\,\frac{\partial u^{\varepsilon}}{\partial x_{2}}&=& \varepsilon\displaystyle\sum_{j=1}^{2}f_{1}^{'}(u^{\varepsilon})\frac{\partial}{\partial x_{j}}\left(B(u^{\varepsilon})\,\frac{\partial u^{\varepsilon}}{\partial x_{j}}\right)-\frac{\partial f_{1}(u^{\varepsilon})}{\partial t},\nonumber\\
 f_{1}^{'}(u^{\varepsilon})f_{2}^{'}(u^{\varepsilon})\,\frac{\partial u^{\varepsilon}}{\partial x_{1}} + \left(f_{2}^{'}(u^{\varepsilon})\right)^{2}\,\frac{\partial u^{\varepsilon}}{\partial x_{2}} &=& \varepsilon\displaystyle\sum_{j=1}^{2}f_{2}^{'}(u^{\varepsilon})\frac{\partial}{\partial x_{j}}\left(B(u^{\varepsilon})\,\frac{\partial u^{\varepsilon}}{\partial x_{j}}\right)-\frac{\partial f_{2}(u^{\varepsilon})}{\partial t}.
\end{eqnarray}
Denote
\begin{eqnarray}\label{compactness12.eqn21}
 F_{11}(\lambda) :=\int_{0}^{\lambda}\left(f_{1}^{'}(s)\right)^{2}\,ds,\nonumber\\
 F_{12}(\lambda) :=\int_{0}^{\lambda}f_{1}^{'}(s)f_{2}^{'}(s)\,ds,\nonumber\\
 F_{22}(\lambda) :=\int_{0}^{\lambda}\left(f_{2}^{'}(s)\right)^{2}\,ds.
\end{eqnarray}
Equation \eqref{compactness12.eqn20} can be rewritten as 
\begin{eqnarray}\label{compactness12.eqn22}
 \frac{\partial F_{11}(u^{\varepsilon})}{\partial x_{1}} + \frac{\partial F_{12}(u^{\varepsilon})}{\partial x_{2}} &=& \varepsilon\displaystyle\sum_{j=1}^{2}\frac{\partial}{\partial x_{j}}\left(B(u^{\varepsilon})\,\frac{\partial f_{1} (u^{\varepsilon})}{\partial x_{j}}\right)-\varepsilon\displaystyle\sum_{j=1}^{2}B(u^{\varepsilon})\left(\frac{\partial u^{\varepsilon}}{\partial x_{j}}\right)^{2}f_{1}^{''}(u^{\varepsilon})-\frac{\partial f_{1}(u^{\varepsilon})}{\partial t},\nonumber\\
 \frac{\partial F_{12}(u^{\varepsilon})}{\partial x_{1}} + \frac{\partial F_{22}(u^{\varepsilon})}{\partial x_{2}} &=& \varepsilon\displaystyle\sum_{j=1}^{2}\frac{\partial}{\partial x_{j}}\left(B(u^{\varepsilon})\,\frac{\partial f_{2}(u^{\varepsilon}) }{\partial x_{j}}\right)-\varepsilon\displaystyle\sum_{j=1}^{2}B(u^{\varepsilon})\left(\frac{\partial u^{\varepsilon}}{\partial x_{j}}\right)^{2}f_{2}^{''}(u^{\varepsilon})-\frac{\partial f_{2}(u^{\varepsilon})}{\partial t}.\nonumber\\
 &
\end{eqnarray}
We now show that RHS of two equations of \eqref{compactness12.eqn22} lie in a compact set of $H^{-1}(\Omega_{T})$. For that we show that for $i=1,2$,
\begin{enumerate}
 \item[(i).]
 \begin{equation}\label{compactness12.eqn23}
  \varepsilon\displaystyle\sum_{j=1}^{2}\frac{\partial}{\partial x_{j}}\left(B(u^{\varepsilon})\,\frac{\partial f_{i} (u^{\varepsilon})}{\partial x_{j}}\right)\to 0\,\,\mbox{in}\,\,H^{-1}(\Omega_{T}),
 \end{equation}
\item[(ii).] 
\begin{equation}\label{compactness12.eqn24}
 -\varepsilon\displaystyle\sum_{j=1}^{2}B(u^{\varepsilon})\left(\frac{\partial u^{\varepsilon}}{\partial x_{j}}\right)^{2}f_{1}^{''}(u^{\varepsilon})-\frac{\partial f_{1}(u^{\varepsilon})}{\partial t}\,\,\mbox{is bounded in the space of measure}\,\,M(\Omega_{T}).
\end{equation}
\end{enumerate}
Firstly, we prove \eqref{compactness12.eqn23}. Observe that 
\begin{eqnarray}\label{compactness12.eqn25}
 \Big\|\varepsilon\displaystyle\sum_{j=1}^{2}\frac{\partial}{\partial x_{j}}\left(B(u^{\varepsilon})\,\frac{\partial f_{i} (u^{\varepsilon})}{\partial x_{j}}\right)\Big\|_{H^{-1}(\Omega_{T})} &=& \sup\Big\{\Big|\int_{0}^{T}\int_{\Omega}\left(\varepsilon\displaystyle\sum_{j=1}^{2}\frac{\partial}{\partial x_{j}}\left(B(u^{\varepsilon})\,\frac{\partial f_{i} (u^{\varepsilon})}{\partial x_{j}}\right)\right)\,\nonumber\\ &&\phi(x,t)\,dx\,dt\Big|\,; \|\phi\|_{H^{1}_{0}(\Omega_{T})}\leq 1\Big\},\nonumber\\
 &=& \sup\Big\{\Big|-\int_{0}^{T}\int_{\Omega}\left(\varepsilon\displaystyle\sum_{j=1}^{2}\left(B(u^{\varepsilon})\,f^{'}_{i} (u^{\varepsilon})\frac{\partial u^{\varepsilon}}{\partial x_{j}}\right)\right)\,\nonumber\\ &&\frac{\partial\phi}{\partial x_{j}}(x,t)\,dx\,dt\Big|\,; \|\phi\|_{H^{1}_{0}(\Omega_{T})}\leq 1\Big\},\nonumber\\
 &\leq& \varepsilon \|B\|_{L^{\infty}(I)}\,\left(\displaystyle\max_{1\leq i\leq 2}\|f_{i}^{'}\|_{L^{\infty}(I)}\right)\,\|\nabla u^{\varepsilon}\|_{\left(L^{2}(\Omega_{T})\right)^{2}}.\nonumber\\
 &
\end{eqnarray}
Since $\sqrt{\varepsilon}\|\nabla u^{\varepsilon}\|_{\left(L^{2}(\Omega_{T})\right)^{2}}\leq C,$ which is independent of $\varepsilon$, therefore we have \eqref{compactness12.eqn23} and $\varepsilon\displaystyle\sum_{j=1}^{2}\frac{\partial}{\partial x_{j}}\left(B(u^{\varepsilon})\,\frac{\partial f_{i} (u^{\varepsilon})}{\partial x_{j}}\right)$ lie in a compact set of $H^{-1}(\Omega_{T})$.

Next we show \eqref{compactness12.eqn24}. We know that $L^{1}(\Omega_{T})$ is continuously imbeeded in $\left(L^{\infty}(\Omega_{T})\right)^{\ast}$. Therefore we have 
\begin{eqnarray}\label{compactness12.eqn26}
 \Big\| -\varepsilon\displaystyle\sum_{j=1}^{2}B(u^{\varepsilon})\left(\frac{\partial u^{\varepsilon}}{\partial x_{j}}\right)^{2}f_{1}^{''}(u^{\varepsilon})-\frac{\partial f_{1}(u^{\varepsilon})}{\partial t}\Big\|_{M(\Omega_{T})} &\leq& \int_{0}^{T}\int_{\Omega}\Big|\varepsilon\displaystyle\sum_{j=1}^{2}B(u^{\varepsilon})\left(\frac{\partial u^{\varepsilon}}{\partial x_{j}}\right)^{2}f_{1}^{''}(u^{\varepsilon})\nonumber\\
 &&-f_{1}^{'}(u^{\varepsilon})\frac{\partial u^{\varepsilon}}{\partial t}\Big|\,dx\,dt,\nonumber\\
 &\leq& \|B\|_{L^{\infty}(I)}\,\displaystyle\max_{1\leq i\leq 2}\left(\displaystyle\sup_{y\in I}\Big|f_{i}^{''}(y)\Big|\right)\left(\sqrt{\varepsilon}\,\|\nabla u^{\varepsilon}\|_{\left(L^{2}(\Omega_{T})\right)^{2}}\right)^{2} \nonumber\\
 &&+ \displaystyle\max_{1\leq i\leq 2}\left(\displaystyle\sup_{y\in I}\Big|f_{i}^{''}(y)\Big|\right)\,\Big\|\frac{\partial u^{\varepsilon}}{\partial t}\Big\|_{L^{1}(\Omega_{T})}.
\end{eqnarray}
Applying Theorem \ref{timedervative.thm1} and $\sqrt{\varepsilon}\|\nabla u^{\varepsilon}\|_{\left(L^{2}(\Omega_{T})\right)^{2}}\leq C,$ which is independent of $\varepsilon$, we get \eqref{compactness12.eqn24}.\\
We want to use Theorem of compensated compactness \cite[p.31]{Dacorogna} to conclude the almost every convergence of 
$\left\{u^{\varepsilon}\right\}$ to a function $u$ in $L^{\infty}(\Omega_{T})$.
Observe that 
\begin{eqnarray}\label{Comp.Theorem.eqn1}
 F_{11}(u^{\varepsilon})&\rightharpoonup& \overline{F}_{11}\,\,\mbox{in}\,\,\, L^{2}(\Omega_{T})\,\,\mbox{as}\,\,\varepsilon\to 0,\nonumber\\
 F_{12}(u^{\varepsilon})&\rightharpoonup& \overline{F}_{12}\,\,\mbox{in}\,\,\, L^{2}(\Omega_{T})\mbox{as}\,\,\varepsilon\to 0\,\,,\nonumber\\
 F_{22}(u^{\varepsilon})&\rightharpoonup& \overline{F}_{22}\,\,\mbox{in}\,\,\, L^{2}(\Omega_{T})\mbox{as}\,\,\varepsilon\to 0\,\,.
\end{eqnarray}
Therefore we have
$$\left(F_{11}(u^{\varepsilon}), F_{12}(u^{\varepsilon}), F_{12}(u^{\varepsilon}), F_{22}(u^{\varepsilon}) \right)\rightharpoonup \left(\overline{F}_{11},\overline{F}_{12},\overline{F}_{12}, \overline{F}_{22} \right)\,\,\mbox{as}\,\,\varepsilon\to 0.$$
The following combinations 
\begin{eqnarray}\label{comp.Theorem.eqn2}
 \frac{\partial}{\partial x_{1}}F_{11}(u^{\varepsilon}) + \frac{\partial}{\partial x_{2}}F_{12}(u^{\varepsilon})\,\, &,& \frac{\partial}{\partial x_{1}}F_{12}(u^{\varepsilon}) + \frac{\partial}{\partial x_{2}}F_{22}(u^{\varepsilon}),\nonumber\\
 \frac{\partial}{\partial t}F_{11}(u^{\varepsilon}) &,& \frac{\partial}{\partial t}F_{22}(u^{\varepsilon}),
\end{eqnarray}
are compact in $H^{-1}(\Omega_{T})$.\\
Consider the set 
$$\nu :=\left\{\left(\lambda,\xi\right)\in \mathbb{R}^{4}\times\mathbb{R}^{3}\setminus\left\{0\right\}\,\,;\,\,\lambda_{1}\xi_{1}+\lambda_{2}\xi_{2}=0,\,\,
\lambda_{1}\xi_{1}+\lambda_{2}\xi_{2}=0,\,\,\lambda_{1}\xi_{0}=0,\,\,\lambda_{3}\xi_{0}=0\right\}.$$
The quadratics $Q(F_{11}(u^{\varepsilon}), F_{12}(u^{\varepsilon}), F_{12}(u^{\varepsilon}), F_{22}(u^{\varepsilon}))$ which vanish 
on the projections,
$$\Lambda=\left\{\lambda\in\mathbb{R}^{4}\,\,:\,\,\left(\lambda,\xi\right)\in\nu\right\},$$
which is precisely
$$\left\{\lambda\in\mathbb{R}^{4}\,\,:\,\,\lambda_{1}\lambda_{4}-\lambda_{2}\lambda_{3}=0\right\}.$$
Therefore we have 
\begin{eqnarray}\label{comp.Theorem.eqn3}
 \left(F_{11}(u^{\varepsilon}),\,F_{12}(u^{\varepsilon})\right)\cdot\left(F_{22}(u^{\varepsilon}),\,F_{12}(u^{\varepsilon})\right)\rightharpoonup \overline{F}_{11}\overline{F}_{22}
 -\overline{F}_{12}^{2}\,\,\mbox{in}\,\,L^{2}(\Omega_{T})\,\,\mbox{as}\,\,\varepsilon\to 0.
\end{eqnarray}
If we express \eqref{comp.Theorem.eqn3} in term of youngs measures $\nu_{x,t}(\cdot)$, we write
\begin{eqnarray}\label{comp.Theorem.eqn4}
 \Big\langle \nu_{x,t}, \left(F_{11}(\lambda)-\overline{F}_{11}\right)\cdot\left(F_{22}(\lambda)-\overline{F}_{22}\right)-\left(F_{12}(\lambda)-\overline{F}_{12}\right)^{2}\Big\rangle=0.
\end{eqnarray}
We now want to show that a subsequence of $\left\{u^{\varepsilon}\right\}$ converges {\it a.e.} to a function $u$ in $L^{\infty}(\Omega_{T})$.
We repeat the following proof from \cite[p.702-p.703]{Tadmor}. Let us consider the following nonnegative function
$$D(w):= \left(F_{11}(w)-F_{11}(c)\right)\cdot\left(F_{22}(w)-F_{22}(c)\right)-\left(F_{12}(w)-F_{12}(c)\right)^{2},$$
where $c=c(x,t)$ denotes an arbitary function which needs to be determined and is independent of $u^{\varepsilon}$. \\ 
Using Cauchy-Swartz inequality, we observe that 
\begin{eqnarray}\label{comp.Theorem.eqn5}
 \left(F_{12}(w)-F_{12}(c)\right)^{2} \leq \left(F_{11}(w)-F_{11}(c)\right)\cdot\left(F_{22}(w)-F_{22}(c)\right).
\end{eqnarray}
Therefore $D(w)$ is nonnegative. Using \eqref{comp.Theorem.eqn3}, we get 
\begin{eqnarray}\label{comp.Theorem.eqn6}
 D(u^{\varepsilon})&=& \left(F_{11}(u^{\varepsilon})-F_{11}(c)\right)\cdot\left(F_{22}(u^{\varepsilon})-F_{22}(c)\right) - \left(F_{12}(u^{\varepsilon})-F_{12}(c)\right)^{2},\nonumber\\
&=& \left[\left(F_{11}(u^{\varepsilon})-\overline{F}_{11}\right) + \left(\overline{F}_{11}-F_{11}(c)\right)\right]\cdot
\left[\left(F_{22}(u^{\varepsilon})-\overline{F}_{22}\right)+ \left(\overline{F}_{22}-F_{22}(c)\right)\right]\nonumber\\
&-& \Big[\left(F_{12}(u^{\varepsilon})-\overline{F}_{12}\right)^{2} + 2\,\left(F_{12}(u^{\varepsilon})-\overline{F}_{12}\right)
\left(\overline{F}_{12}-F_{12}(c)\right) + \left(\overline{F}_{12}-F_{12}(c)\right)^{2}\Big],\nonumber\\
&\rightharpoonup& \left[\left(\overline{F}_{11}-F_{11}(c)\right)\cdot\left(\overline{F}_{22}-F_{22}(c)\right)-\left(\overline{F}_{12}-F_{12}(c)\right)\right].
\end{eqnarray}
Since $0\leq \overline{F}_{11}\leq \int_{\mbox{min\,u}}^{\mbox{max\,u}}\left(f_{1}^{\prime}(v)\right)^{2}\,dv$, there exists $c=c(x,t)$ such that
$\int^{c}\left(f_{1}^{\prime}(v)\right)^{2}\,dv=\overline{F}_{11}$. For this $c=c(x,t)$, we have $F_{11}(c)-\overline{F}_{11}=0$. From equations 
\eqref{comp.Theorem.eqn5} and \eqref{comp.Theorem.eqn6}, we have $D(u^{\varepsilon})\rightharpoonup 0$. Since $D(u^{\varepsilon})$ is a bounded function, we have 
$D^{2}(u^{\varepsilon})\rightharpoonup 0$. Therefore $D(u^{\varepsilon})\to 0$ storngly.\\ It is easy to see that $D(w)$ has minimum at$w=c$. Next assume that
$f_{1}^{\prime}$ and $f_{2}^{\prime}$ are linearly independent, {\it i.e.,}
$$\forall |\xi|=1,\,\,\, S(\xi, .):=\xi_{1}f_{1}^{\prime}(v) +\xi_{2}f_{2}f_{2}^{\prime}(v)\neq 0$$
on any nontrivial interval. Then the Cauchy-Swartz inequality in \eqref{comp.Theorem.eqn5} is strict. Therefore we have $D(c)=0$
is the strict minimun of $D(w)$, {\it i.e.,} $D(w)> D(c)$ for $w\neq c$. Then strong convergence of $D(u^{\varepsilon})\to 0$ implies
strong convergence of a subsequence of $u^{\varepsilon}$ to  $c(x,t)= u(x,t)$.
\end{proof}
\\

\textbf{Proof of Theorem \ref{paper2.compensatedcompactness.theorem2}:}
 The proof of Theorem \eqref{paper2.compensatedcompactness.theorem2} follows from the proof of Theorem 1.1 of \cite[p.31]{Ramesh}.
\\
\newpage

\begin{thebibliography}{10}
 \bibitem{MR542510} C. Bardos, A. Y. le Roux, and J.-C. N{\'e}d{\'e}lec, {\it }, {\it First order quasilinear equations with boundary conditions}  
 Comm. Partial Differential Equations, 4(9):1017–1034, 1979.
 \bibitem{Dacorogna} B. Dacorogna, {\it Weak continuity and weak lower semicontinuity of non-linear functionals}, springer-Verlag, 1982.
 \bibitem{MR2169977} C. M. Dafermos, {\it Hyperbolic conservation laws in continuum physics}, Springer-Verlag, Berlin, second edition, 2005.
 \bibitem{MR1304494} E. Godlewski and P.-A. Raviart, {\it Hyperbolic systems of conservation laws}, Math\'ematiques and Applications, Ellipses (Paris), 1991.
 \bibitem{Ramesh}  R. Mondal, S. Sivaji Ganesh and S. Baskar, {\it Quasilinear viscous approximations to scalar
conservation laws}, Preprint, 2017.
\bibitem{MR584398} L. Tartar, {\it Compensated compactness and applications to partial differential equations}, Pitman, Boston, Mass.-London,
Vol-39, 136-212, 1979.
\bibitem{MR2582099} L. Tartar, {\it The general theory of homogenization: A personalized introduction}, Springer-Verlag, Berlin; UMI, Bologna,
Vol-7, 2009.
\bibitem{Tadmor} E. Tadmor, M. Rascle and P. Bagnerini, {\it Compensated Compactness For 2D Conservation Laws}, Journal of hyperbolic
differential equations, Vol.2,No.3, 697-712, 2005.
\end{thebibliography}
\bibliographystyle{amsplain}

\end{document}